\documentclass{amsart}

\newtheorem{theorem}{Theorem}[section]
\newtheorem{proposition}[theorem]{Proposition}
\newtheorem{lemma}[theorem]{Lemma}
\newtheorem{corollary}[theorem]{Corollary}
\newtheorem{fact}[theorem]{Fact}

\theoremstyle{definition}
\newtheorem{definition}[theorem]{Definition}

\theoremstyle{remark}
\newtheorem{remark}[theorem]{Remark}


\def\Ind{\setbox0=\hbox{$x$}\kern\wd0\hbox to 0pt{\hss$\mid$\hss} \lower.9\ht0\hbox to 0pt{\hss$\smile$\hss}\kern\wd0} 

\def\Notind{\setbox0=\hbox{$x$}\kern\wd0\hbox to 0pt{\mathchardef \nn=12854\hss$\nn$\kern1.4\wd0\hss}\hbox to 0pt{\hss$\mid$\hss}\lower.9\ht0 \hbox to 0pt{\hss$\smile$\hss}\kern\wd0} 

\def\ind{\mathop{\mathpalette\Ind{}}}



\newcommand{\compcent}[1]{\vcenter{\hbox{$#1\circ$}}} 

\newcommand{\comp}{\mathbin{\mathchoice 
{\compcent\scriptstyle}{\compcent\scriptstyle} 
{\compcent\scriptscriptstyle}{\compcent\scriptscriptstyle}}} 


\numberwithin{equation}{section}

\def \d {\delta}

\def \dd {\partial}
\def \D {\Delta}
\def \t {\theta}
\def \T {\Theta}
\def \l {\langle}
\def \r {\rangle}
\def \L {\Lambda}
\def \I {\mathcal I}
\def \P {\mathcal P}
\def \L {\Lambda}
\def \V {\mathcal V}
\def \NN {\mathbb N}
\def \QQ {\mathbb Q}
\def \U {\mathbb U}
\def \C {\mathcal C}

\def \al {\alpha}
\def \DD {\mathcal D}

\def \Ga {\Gamma}

\def \s {\sigma}

\def \U {\mathbb U}
\def \M {\mathcal M}
\def \w {\omega}
\def \HS {\underline{\mathcal{D}}}

\title[Partial differential fields with an automorphism]{On the model companion of partial differential fields
with an automorphism}
\author{Omar Leon Sanchez}
\address{Omar Leon Sanchez\\
McMaster University\\
Department of Mathematics and Statistics\\
1280 Main Street West\\
Hamilton, Ontario \  L8S 4L8\\
Canada}
\email{oleonsan@math.mcmaster.ca}
\date{\today}

\begin{document}

\begin{abstract}
We prove that the class of partial differential fields of characteristic zero with an automorphism has a model companion. We then establish the basic model theoretic properties of this theory and prove that it satisfies the canonical base property, and consequently the Zilber dichotomy, for finite dimensional types.
\end{abstract}

\maketitle

\begin{center}
\it{Keywords: model theory, differential and difference algebra.} \\
\it{AMS 2010 Mathematics Subject Classification: 03C60, 12H05, 12H10}
\end{center}

\section*{Introduction}

The study of fields equipped with additive operators is a topic that has drawn the attention of the mathematical community since the early 1940's. For instance, the study of meromorphic solutions of differential equations led to the study of arbitrary fields equipped with several commuting derivations. This subject, now known as differential algebra, is an extremely rich area of study with active research in differential Galois theory and applications to computer algebra and numerical integration techniques. Similarly, the study of difference equations, in the realm of fields equipped with automorphisms, has had several applications to algebraic dynamics and diophantine geometry.

Even though the study of the above two areas is interesting in its own right, the interplay between them has also led to the development of the theory of differential-difference fields; that is, fields equipped with commuting derivations and automorphisms. In the 1970's, this theory was first studied from an algebraic perspective by Cohn \cite{Co}. Later on, a Galois theory for linear differential-difference equations was developed by Hardouin and Singer \cite{HaSi}, among others.

Whether it is in the differential, difference, or differential-difference setting, the main applications come from understanding the algebraic geometry that arises when considering the solution sets of differential, difference, and differential-difference equations, respectively. A natural approach to the subject is to work in a large universal domain where all the varieties, defined over various fields, cohabit; much in the spirit of Weil's algebraic geometry.

Model theory provides the right analogue notion of universal domains for the above settings. These domains are precisely the (large) existentially closed models of the theory under consideration; that is, the models that have a realization for every quantifier free formula for which a realization exists in some extension. In order to apply model-theoretic techniques (from geometric stability theory, for instance) it is customary to show that the class of existentially closed models is axiomatizable. Equivalently, to show that the theories of differential, difference, and differential-difference fields have a model companion.

The existence of the model companions of the theories of differential and difference fields has already proven its fruitfulness. For example, in the setting of (partial) differential fields of characteristic zero, the model-theoretic properties of the model companion, such as $\omega$-stability and the Zilber dichotomy, were at the heart of the development of a generalized differential Galois theory \cite{Pi9} and the proof of the Mordell-Lang conjecture for function fields \cite{Hru1}. Similarly, in the context of difference fields with one automorphism (it is known that the theory of difference fields with $n>1$ commuting automorphisms does not admit a model companion), again the model-theoretic properties of the model companion, such as supersimplicity and a version of the Zilber dichotomy, have been succesfully utilized in number theoretic applications such as the model-theoretic proof of the Manin-Mumford conjecture \cite{Hru2}.

In this paper we study the model theory of differential-difference fields with several commuting derivations and one automorphism. In particular, we show that this theory admits a model companion, and that it is supersimple and it satisfies the Zilber dichotomy. We expect that the results of this paper can be applied in the context of differential-difference Galois theory, and that (using the properties of canonical bases in supersimple theories) a version of Zilber's indecomposability theorem for definable groups holds.

It has been known for over fifteen years that the theory of ordinary differential fields of characteristic zero with an automorphism admits a model companion, and it has been studied intensively by Bustamante \cite{Bu}, \cite{Bu2}, \cite{Bu3}. However, the techniques used to prove the existence of the model companion do not extend to the theory of partial differential fields with an automorphism. In this paper we use a different approach to show that the latter theory has indeed a model companion, which we denote by $DCF_{0,m}A$. 

For many geometrically well behaved theories one can prove the existence of a model companion by adapting the axiomatization of $ACFA$ given by Chatzidakis and Hrushovski in terms of algebro-geometric objects \cite{CH}. These so called ``geometric axiomatizations'' have succesfully been applied to yield the existence of model companions in several interesting theories: ordinary differential fields \cite{PiPi}, partial differential fields (several commuting derivations) \cite{Le}, \cite{Le2}, fields with commuting Hasse-Schmidt derivations in positive characteristic \cite{Kow}, fields with free operators \cite{MS}, and in theories having a ``geometric notion of genericity'' \cite{Hi}.

In \cite{GR}, Guzy and Rivi\`ere point out that the existentially closed partial differential fields with an automorphism are characterized by a certain differential-algebro geometric condition (see Fact~\ref{GR} below), very much in the spirit of the geometric axioms for $ACFA$. However, it remains open as to whether their geometric condition can be expressed in a first order way. The problem lies in determining the definability of differential irreducibility and differential dominance in definable families of differential algebraic varieties (these problems are related to the generalized Ritt problem, see \cite{GKO} and \cite{HKM}). 

Motivated by the methods in \cite{Le2}, we bypass the above definability issues by applying the differential algebraic machinery of characteristic sets of prime differential ideals developed by Kolchin \cite{Ko} (and Rosenfeld \cite{Ro}). More precisely, we prove a characterization of the existentially closed models in terms of characteristic sets of prime differential ideals (see Theorem~\ref{main}). Then, using the fact that the condition of being a characteristic set of a prime differential ideal is definable  (see Fact~\ref{chr} below), we observe that our characterization is indeed first order.

Once we have the existence of the model companion $DCF_{0,m}A$, the results in \S3 of \cite{CP} imply, among other things, that each completion of $DCF_{0,m}A$ is supersimple and it satisfies the independence theorem over differential-difference fields that are differentially closed. In Section 3, we formally state these results and observe that, following the spirit of the arguments in \S1 of \cite{CH}, one can show that the independence theorem holds over algebraically closed differential-difference fields. We also present, in Proposition~\ref{fields}, some basic properties of the fixed field and the various fields of constants.

In Section 5, we prove a Zilber dichotomy theorem for $DCF_{0,m}A$ via a now standard approach using jet spaces. More precisely, following the ideas of Pillay and Ziegler from \cite{PZ}, we develop a notion of $(\D,\s)$-jet space for finite dimensional $(\D,\s)$-varieties (which in this setting turn out to agree with the generalized jet spaces of Moosa and Scanlon \cite{MS2}, the reader familiar with their work is referred to the Appendix for details). Then, in Theorem~\ref{cbp}, we use these $(\D,\s)$-jet spaces and the properties of finite dimensional $(\D,\s)$-modules to prove the canonical base property for finite dimensional types. Finally, a standard adaptation of the argument of Pillay \cite{Pi8} for compact complex manifolds, shows that the canonical base property implies (and is rather stronger than) the Zilber dichotomy.

\

\noindent {\it Acknowledgements.} I would like to thank Rahim Moosa and Bradd Hart for their helpful comments and suggestions on a previous version of this paper.

\

\section{Differential algebraic preliminaries}

\subsection{Differential algebra}

\

By a differential ring (field) we mean a ring (field) equipped with a finite set $\D=\{\d_1,\dots,\d_m\}$ of commuting derivations. Let us fix a differential field $(K,\D)$ of characteristic zero. The set of derivative operators is defined as the commutative monoid 
$$\T:= \{\d_m^{e_m}\cdots \d_1^{e_1}\colon \, e_i<\omega\}.$$
Let $x=(x_1,\dots,x_n)$ be a tuple of (differential) indeterminates, the set of algebraic indeterminates is $\T x=\{\t x_i: \t\in \T, i=1,\dots,n\}$. Then the ring of differential polynomials over $K$ is defined as $K\{x\}:=K[\T x]$. One can equip $K\{x\}$ with the structure of a differential ring by extending $\D$, using the Leibniz rule and defining $$\d_j(\d_m^{e_m}\cdots \d_1^{e_1}x_i)=\d_m^{e_m}\cdots \d_j^{e_j+1}\cdots \d_1^{e_1}x_i.$$ 

The \emph{canonical ranking} on the algebraic indeterminates $\T x$ is defined by 
\begin{displaymath}
\d_m^{e_m}\cdots \d_1^{e_1}x_i< \d_m^{r_m}\cdots \d_1^{r_1}x_j \iff \left(\sum e_k,i,e_m,\dots,e_1\right)<\left(\sum r_k,j,r_m,\dots,r_1\right)
\end{displaymath}
in the lexicographical order. Let $f\in K\{x\}\setminus K$. The \emph{leader} of $f$, $v_f$, is the highest ranking algebraic indeterminate that appears in $f$. The \emph{degree} of $f$, $d_f$, is the degree of $v_f$ in $f$. The \emph{rank} of $f$ is the pair $(v_f,d_f)$. If $g\in K\{x\}\setminus K$ we say that $g$ has \emph{lower rank} than $f$ if $rank(g)<rank(f)$ in the lexicograpical order.

Let $f\in K\{x\}\setminus K$. The \emph{separant} of $f$, $S_f$, is the formal partial derivative of $f$ with respect to $v_f$, that is 
$$S_f:=\frac{\partial f}{\partial v_f}\in K\{x\}.$$
The \emph{initial} of $f$, $I_f$, is the leading coefficient of $f$ when viewed as a polynomial in $v_f$, that is, if we write 
$$f=\sum_{i=0}^{d_f}g_i\cdot v_f^i$$
where $g_i\in K\{x\}$ and $v_{g_i}<v_f$, then $I_f=g_{d_f}$. Note that both $S_f$ and $I_f$ have lower rank than $f$. 

\begin{definition}
Let $f,g\in K\{x\}\setminus K$. We say $g$ is \emph{partially reduced} with respect to $f$ if no (proper) derivative of $v_f$ appears in $g$. If in addition the degree of $v_f$ in $g$ is less than $d_f$ we say that $g$ is \emph{reduced} with respect to $f$.
\end{definition}

A finite set $\L=\{f_1,\dots,f_s\}\subset K\{x\}\setminus K$ is said to be \emph{autoreduced} if for all $i\neq j$ we have that $f_i$ is reduced with respect to $f_j$. We will always write autoreduced sets in order of increasing rank, i.e., $rank(f_1)<\dots<rank(f_s)$. The canonical ranking on autoreduced sets is defined as follows: $\{g_1,\dots,g_r\}<\{f_1,\dots,f_s\}$ if and only if, either there is $i\leq r,s$ such that $rank(g_j)=rank(f_j)$ for $j<i$ and $rank(g_i)<rank(f_i)$, or $r>s$ and $rank(g_j)=rank(f_j)$ for $j\leq s$.

An ideal $I$ of $K\{x\}$ is said to be a \emph{differential ideal} if $\d f\in I$ for all $f\in I$ and $\d\in \D$. Given a set $A \subseteq K\{x\}$ the differential ideal generated by $A$ is denoted by $[A]$. It can be shown that every differential ideal $I$ of $K\{x\}$ contains a lowest ranking autoreduced set (\cite{Ko}, Chap. I, \S10), called a \emph{characteristic set} of $I$. 

\begin{remark}\label{cont}
If $\L$ is a characteristic set of $I$ and $g\in K\{x\}$ is reduced with respect to all $f\in\L$, then $g\notin I$. Indeed, if $g$ were in $I$ then $$\{f\in \L\colon rank(f)<rank(g)\}\cup\{g\}$$ would be an autoreduced subset of $I$ of lower rank than $\L$, contradicting the minimality of $\L$.
\end{remark}

Even though differential ideals are not in general generated by their characteristic sets, prime differential ideals are determined by these.

\begin{fact}[\cite{Ko}, Chap. IV, \S 9]
Let $\P$ be a prime differential ideal of $K\{x\}$ and $\L$ a characteristic set of $\P$. Then $$\P=[\L]:H_\L^{\infty}=\{g\in K\{x\}\colon \, H_\L^\ell \cdot g\in [\L] \text{ for some } \ell <\omega\},$$
where $\displaystyle H_\L=\prod_{f\in\L}S_f I_f$.
\end{fact}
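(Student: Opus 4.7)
The plan is to establish the two inclusions of $\P = [\L]:H_\L^\infty$ separately, using Remark~\ref{cont} for the easy direction and the Ritt--Kolchin reduction algorithm for the hard one.

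For the inclusion $[\L]:H_\L^\infty \subseteq \P$: since $\L\subseteq \P$ and $\P$ is a differential ideal, $[\L]\subseteq \P$, so it suffices to check that no power of $H_\L$ lies in $\P$. For each $f\in\L$, both $S_f$ and $I_f$ have strictly lower rank than $f$ ($I_f$ omits $v_f$ entirely, and $S_f$ has degree $d_f-1$ in $v_f$ in characteristic zero), so each is reduced with respect to $f$. Because $\L$ is autoreduced, $f$ is reduced with respect to every $f'\in\L\setminus\{f\}$, and this property is inherited by $S_f$ and $I_f$ since they are polynomial expressions in the algebraic indeterminates already occurring in $f$. Thus $S_f,I_f$ are reduced with respect to every element of $\L$, and Remark~\ref{cont} gives $S_f,I_f\notin\P$. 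By primeness $H_\L^\ell\notin\P$ for every $\ell$, so $H_\L^\ell g\in[\L]\subseteq\P$ forces $g\in\P$.

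For the reverse inclusion I would run the standard two-pass reduction. Given $g$, for each $f\in\L$ and each proper derivative $\theta v_f$ appearing in $g$, the identity $\theta f = S_f\,\theta v_f + r$ (with $r$ of rank strictly below $\theta v_f$) allows elimination of $\theta v_f$ at the cost of multiplying by a power of $S_f$. Iterating produces exponents $a_f$ and a polynomial $\tilde g$ partially reduced with respect to every $f\in\L$ with
\begin{equation*}
\prod_{f\in\L} S_f^{a_f}\,g \equiv \tilde g \pmod{[\L]}.
\end{equation*}
Next, algebraic polynomial division of $\tilde g$ by each $f\in\L$ in the single variable $v_f$, multiplying by powers of $I_f$ to clear the leading coefficient, yields exponents $b_f$ and $\hat g$ with $\deg_{v_f}\hat g < d_f$ for all $f\in\L$, and with
\begin{equation*}
\prod_{f\in\L} I_f^{b_f}\,\tilde g \equiv \hat g \pmod{(\L)}.
\end{equation*}
Combining these two passes and bounding all exponents by a common $\ell$, we obtain $H_\L^\ell g \equiv M\cdot \hat g \pmod{[\L]}$ for some monomial $M$ in the $S_f,I_f$. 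If $g\in\P$, then primeness together with $S_f,I_f\notin\P$ force $\hat g\in\P$; since $\hat g$ is reduced with respect to every $f\in\L$, Remark~\ref{cont} (together with the properness of $\P$, which rules out a nonzero constant remainder) forces $\hat g = 0$, whence $H_\L^\ell g\in[\L]$.

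The main obstacle I anticipate is the propagation of reducedness through the two-stage algorithm: I need to verify that partially reducing with respect to one $f\in\L$ does not undo the partial reducedness already achieved against the other $f'\in\L$, and analogously for the subsequent algebraic divisions. Both properties follow from the autoreducedness of $\L$, which ensures that $f$, $S_f$, and $I_f$ are partially reduced with respect to every $f'\neq f$, so the transformations introduced during the algorithm stay in the ``safe zone'' of indeterminates. This propagation is the delicate point that makes the two-stage reduction output a single $\hat g$ reduced with respect to all of $\L$ simultaneously, which is what ultimately lets Remark~\ref{cont} close the argument.
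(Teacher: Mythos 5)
Your proof follows the standard Ritt--Kolchin reduction argument and is essentially the one found in the cited source; the paper itself offers no proof, only the reference to Kolchin, so there is no internal argument to compare against.

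One justification you give is inaccurate, though the conclusion you want is still true. In the partial-reduction pass you assert that because $f$, $S_f$, $I_f$ are partially reduced with respect to every $f'\neq f$, ``the transformations stay in the safe zone'' and never reintroduce offending indeterminates. That is not so: eliminating $\theta v_f$ via $\theta f = S_f\,\theta v_f + r$ produces a remainder $r$ that certainly can contain proper derivatives $\theta v_{f'}$ of other leaders, since $\theta f$ involves $\theta$-derivatives of every algebraic indeterminate present in $f$, and $v_{f'}$ may well appear in $f$ (autoreducedness only bounds its degree). What actually makes the algorithm terminate is that every such newly introduced $\theta v_{f'}$ has strictly lower rank than the $\theta v_f$ being eliminated, and the canonical ranking is a well-ordering; the same rank-descent argument (together with reducing in decreasing order of rank) handles the algebraic pass, where the Euclidean division by $f_i$ can raise the degree in a lower-ranked $v_{f_j}$. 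So the mechanism is a descent on rank, not confinement to a fixed set of indeterminates. The rest of your argument---including the observation that $S_f, I_f$ are reduced with respect to all of $\L$ and hence lie outside $\P$ by Remark~\ref{cont}, and the handling of a possible nonzero constant remainder via properness of $\P$---is correct.
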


\subsection{Differentially closed fields}\label{notat}

\

Let $\mathcal L_m$ be the language of differential rings with $m$ derivations, and $DF_{0,m}$ the $\mathcal L_m$-theory of differential fields of characteristic zero with $m$ commuting derivations. In \cite{Mc}, McGrail showed that this theory has a model companion, the theory of differentially closed fields $DCF_{0,m}$, and proved it is a complete $\omega$-stable theory that admits quantifier elimination and elimination of imaginaries. We also have the following facts:
\begin{itemize}
\item Suppose $(K,\D)\models DCF_{0,m}$ and $A\subseteq K$. Then $\operatorname{dcl}(A)$ equals the differential field generated by $A$ and $\operatorname{acl}(A)$ equals $\operatorname{dcl}(A)^{alg}$.
\item If $K$ is sufficiently saturated and $k\subseteq l$ are (small) differential subfields, then for any tuple $a$ from $K$ we have that $a\ind_k l$ if and only if the differential field generated by $a$ over $k$ is algebraically disjoint from $l$ over $k$.
\end{itemize}

Assume $(K,\D)\models DCF_{0,m}$. Given $A\subseteq K\{x\}$ and $V\subseteq K^n$, we let 
$$\V(A):=\{a\in K^n\colon f(a)=0 \text{ for all } f\in A\}$$ 
and 
$$\I(V/K)_\D:=\{f\in K\{x\}\colon f(a)=0 \text{ for all } a\in V\}.$$ 
Also, if $\L$ is a characteristic set of a prime differential ideal we let $$ \V^*(\L):=\V(\L)\setminus\V(H_\L).$$
Note that $\V(A)$ is a closed set in the Kolchin topology of $K^n$ (in other words, a $\D$-closed set). Recall that this topology is the differential analogue of the Zariski topology of $K^n$; that is, the $\D$-closed sets of $K^n$ are precisely the zero sets of finite systems of differential polynomial equations over $K$. By the differential basis theorem, the Kolchin topology is Noetherian and thus every $\D$-closed set has an irreducible decomposition.

The following basic properties of characteristic sets are essential for our characterization of the existentially closed partial differential fields with an automorphism (see Theorem \ref{main}).

\begin{proposition}\label{basic}
Assume $(K,\D)\models DCF_{0,m}$. Let $\P$ be a prime differential ideal of $K\{x\}$ and $\L$ a characteristic set of $\P$. Then $\V^*(\L)=\V(\P)\setminus\V(H_\L)$, $\V^*(\L)\neq \emptyset$ and $\V(\P)$ is an irreducible component of $\V(\L)$.
\end{proposition}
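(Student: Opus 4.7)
The plan is to deduce the identity directly from the algebraic characterization $\P = [\L]:H_\L^\infty$, then produce a witness for $\V^*(\L) \neq \emptyset$ via existential closure, and finally read off irreducibility as a component from a two-piece decomposition of $\V(\L)$. For the identity, the inclusion $\V(\P) \setminus \V(H_\L) \subseteq \V^*(\L)$ is immediate from $\L \subseteq \P$. For the reverse, fix $a \in \V^*(\L)$ and set $J := \I(a/K)_\D$, the kernel of the differential evaluation $K\{x\} \to K$ at $a$; this $J$ is a prime differential ideal. Every $f \in \L$ vanishes at $a$, so $\L \subseteq J$ and hence $[\L] \subseteq J$; meanwhile $H_\L(a) \neq 0$ gives $H_\L \notin J$. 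Given any $g \in \P$, the Fact yields $\ell$ with $H_\L^\ell g \in [\L] \subseteq J$, and primeness of $J$ forces $g \in J$, i.e.\ $g(a) = 0$. Thus $a \in \V(\P) \setminus \V(H_\L)$.

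For non-emptiness I would first verify $H_\L \notin \P$. Writing $\L = \{f_1, \ldots, f_s\}$ in increasing rank, autoreducedness forces the leaders $v_{f_1}, \ldots, v_{f_s}$ to be strictly increasing; hence the indeterminates appearing in $I_{f_k}$ and $S_{f_k}$ all have rank strictly less than $v_{f_k}$, and in particular strictly less than $v_{f_j}$ for $j \geq k$, while for $j < k$ the reduction conditions are inherited from those of $f_k$. A routine inspection then shows $I_{f_k}$ and $S_{f_k}$ are reduced with respect to every element of $\L$, so by Remark~\ref{cont} they do not lie in $\P$; primeness of $\P$ then gives $H_\L \notin \P$. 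Now embed the field of fractions of $K\{x\}/\P$ into a differentially closed extension $\tilde K$ of $K$. The image $\bar a$ of $x$ satisfies every $f \in \L$ and has $H_\L(\bar a) \neq 0$, so $\V^*(\L)(\tilde K) \neq \emptyset$. Because $K \models DCF_{0,m}$ is existentially closed in its differential field extensions, this existential sentence descends to $K$, yielding $\V^*(\L)(K) \neq \emptyset$.

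Finally, $\V(\P)$ is Kolchin-closed, and it is irreducible since the differential Nullstellensatz in the differentially closed $K$ gives $\I(\V(\P))_\D = \sqrt{\P} = \P$, which is prime. From the identity already proved, $\V(\L) = \V(\P) \cup \bigl(\V(\L) \cap \V(H_\L)\bigr)$, a union of two Kolchin-closed sets. If $W$ is any irreducible Kolchin-closed set with $\V(\P) \subseteq W \subseteq \V(\L)$, then $W = (W \cap \V(\P)) \cup (W \cap \V(H_\L))$, so irreducibility of $W$ forces $W \subseteq \V(\P)$ or $W \subseteq \V(H_\L)$; the latter is excluded by $\V(\P) \not\subseteq \V(H_\L)$, itself a consequence of $\V^*(\L) \neq \emptyset$. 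Hence $W = \V(\P)$ and $\V(\P)$ is an irreducible component of $\V(\L)$. The only mildly subtle step is the verification $H_\L \notin \P$, which rests on the minimality property of $\L$ captured by Remark~\ref{cont}.
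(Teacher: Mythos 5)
Your proof is correct and follows essentially the same route as the paper's, with a few cosmetic differences: for the forward inclusion the paper simply evaluates $H_\L^\ell f$ at $a$ and cancels $H_\L(a)\neq 0$ rather than passing through the prime ideal $\I(a/K)_\D$; for non-emptiness the paper invokes the differential Nullstellensatz directly (if $\V(\P)\subseteq\V(H_\L)$ then $H_\L\in\P$), whereas you prove the relevant instance by hand via an embedding into a differentially closed extension and existential closure; and for the component statement the paper phrases it via a dense $\D$-open subset of an irreducible component, while you use the two-piece closed decomposition $\V(\L)=\V(\P)\cup(\V(\L)\cap\V(H_\L))$. You are in fact slightly more careful than the paper in justifying $H_\L\notin\P$: the paper appeals to Remark~\ref{cont} as if $H_\L$ itself were reduced with respect to $\L$, which it need not be (it is a product), whereas you correctly reduce to the standard facts that each $S_{f_k}$ and $I_{f_k}$ is reduced with respect to $\L$ and then invoke primeness.

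One small inaccuracy in your ``routine inspection'': you assert that all indeterminates appearing in $S_{f_k}$ have rank strictly less than $v_{f_k}$, but $v_{f_k}$ itself appears in $S_{f_k}$ whenever $d_{f_k}>1$ (with degree $d_{f_k}-1$). What is true, and what the paper records, is that the \emph{polynomial} $S_{f_k}$ has lower rank than $f_k$, i.e.\ the pair $(v_{S_{f_k}},d_{S_{f_k}})$ is lexicographically smaller. The conclusion you want — that $S_{f_k}$ is reduced with respect to every element of $\L$ — still holds: for $j\geq k$, no proper derivative of $v_{f_j}$ and no indeterminate of rank greater than $v_{f_k}$ occurs in $S_{f_k}$, and the degree of $v_{f_k}$ in $S_{f_k}$ is $d_{f_k}-1<d_{f_k}$; for $j<k$, reducedness of $S_{f_k}$ with respect to $f_j$ is inherited from that of $f_k$. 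This is a minor slip in wording, not a gap in the argument.
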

\begin{proof}
Let $a\in\V^*(\L)$, we need to show that $f(a)=0$ for all $f\in \P$. Clearly if $f\in [\L]$ then $f(a)=0$. Let $f\in \P$, since $\P=[\L]:H_\L^\infty$, we can find $\ell$ such that $H_\L^\ell\, f\in [\L]$. Hence, $H_\L(a)f(a)=0$, but $H_\L(a)\neq 0$, and so $f(a)=0$. The other containment is clear. If $\V(\P)\subseteq \V(H_\L)$ then, by the differential Nullstellensatz, $H_\L$ would be in $\P$ contradicting Remark~\ref{cont}. Thus $\V(\P)\setminus\V(H_\L)\neq \emptyset$. Now let $V$ be an irreducible component of $\V(\L)$ containing $\V(\P)$.  Since $\V^*(\L)=\V(\P)\setminus \V(H_\L)$, we have that $V\setminus\V(H_\L)=\V(\P)\setminus \V(H_\L)\subseteq \V(\P)$. Hence $\V(\P)$ contains a nonempty $\D$-open set of $V$, and so, by irreducibility of $V$, $\V(\P)=V$.
\end{proof}

In order to prove the existence of the model companion for partial differential fields with an automorphism we will make use of the following result of Tressl.

\begin{fact}[\cite{Tr}, \S 4]\label{chr}
The condition that ``$\L=\{f_1,\dots,f_s\}$ is a characteristic set of a prime differential ideal" is a definable property (in the language $\mathcal L_m$) of the coefficients of $f_1,\dots,f_s$. More precisely, for any $\{f_1(u,x),\ldots,f_s(u,x)\}\subset \mathbb{Q}[u]\{x\}$, where $u=(u_1,\ldots,u_r)$ are (algebraic) indeterminates, there is an $\mathcal{L}_m$-formula $\phi$ such that for all $(K,\D)\models DF_{0,m}$ and $a\in K^r$ we have that $(K,\D)\models \phi(a)$ if and only if $\{f_1(a,x),\ldots,f_s(a,x)\}$ is a characteristic set of a prime differential ideal of $K\{x\}$.
\end{fact}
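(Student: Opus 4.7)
The plan is to reduce the differential-algebraic condition to a combination of (i) syntactic rank data, (ii) a finite set of polynomial identities encoding coherence, and (iii) primality of a suitable polynomial ideal, and then to invoke the fact that primality of an ideal given by explicit generators is first-order definable in $ACF_0$.

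First, I would observe that the autoreducedness of $\{f_1(a,x),\ldots,f_s(a,x)\}$ depends only on the leaders and degrees of the $f_i(a,x)$. Since the ``shape'' of each $f_i(u,x)$ is syntactically fixed, the preservation of the intended leaders and degrees upon the substitution $u\mapsto a$ amounts to a Boolean combination of equalities and disequalities among certain coefficient polynomials in $a$, so autoreducedness is plainly $\mathcal{L}_m$-definable. Likewise, I would encode coherence by demanding that, for every pair $(f_i,f_j)$ whose leaders $v_{f_i},v_{f_j}$ admit a common derivative $\t v_{f_i}=\t' v_{f_j}$, the cross-derivative polynomial $S_{f_j}\t f_i-S_{f_i}\t' f_j$ reduces to zero modulo $\L$ in the sense of Kolchin--Ritt. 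Because the orders of the reduction steps are syntactically bounded in terms of the orders of the $f_i(u,x)$, this condition is again expressible as an $\mathcal{L}_m$-formula on $a$.

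Granting autoreducedness and coherence, Rosenfeld's lemma says that $[\L]:H_\L^\infty$ is a prime differential ideal of $K\{x\}$ if and only if the polynomial ideal $(\L):H_\L^\infty$ is a prime ideal of the finitely generated polynomial ring $R_\L$ obtained from $K$ by adjoining only those algebraic indeterminates $\t x_i$ that are partially reduced with respect to $\L$. Since $R_\L$ has finitely many indeterminates determined uniformly by the syntactic form of the $f_i(u,x)$, and since primality of a polynomial ideal given by explicit generators and a single saturating element is first-order definable in $ACF_0$ (a standard consequence of effective elimination and Gr\"obner-basis techniques, with uniform degree bounds on the saturation index), this primality condition translates into an $\mathcal{L}_m$-formula in the coordinates of $a$.

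Finally, once $[\L]:H_\L^\infty$ is prime with $\L$ autoreduced and contained in it, the minimality of $\L$'s rank among autoreduced subsets is automatic from Remark~\ref{cont}: by Rosenfeld reduction any element reduced with respect to $\L$ lies outside the ideal, so no autoreduced subset of strictly lower rank can exist inside $[\L]:H_\L^\infty$. Conjoining the finitely many definable conditions produced above yields the desired $\mathcal{L}_m$-formula $\phi(u)$. The main obstacle in this approach is the careful application of $ACF_0$-definability of primality, uniformly in the syntactic shape $f_i(u,x)$, so that the ring $R_\L$ and the corresponding generating set depend only on the formula data and not on the particular tuple $a$ substituted.
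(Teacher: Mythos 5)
Your overall strategy matches the one the paper attributes to Tressl: reduce the characteristic-set condition to autoreducedness, coherence, and primality of the saturated polynomial ideal $(\L):H_\L^\infty$ in the finitely many partially reduced indeterminates, and then invoke uniform bounds (van den Dries--Schmidt) for the definability of primality in $ACF_0$. The treatment of autoreducedness and coherence is essentially right, modulo the usual care needed to case-split on which tuples $a$ preserve the intended leaders and degrees.

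However, the final step contains a genuine gap. You claim that once $\L$ is autoreduced and coherent and $(\L):H_\L^\infty$ is prime, minimality of $\L$'s rank ``is automatic from Remark~\ref{cont},'' arguing that ``by Rosenfeld reduction any element reduced with respect to $\L$ lies outside the ideal.'' That reverses what Rosenfeld's lemma actually says. The lemma asserts that every element of $[\L]:H_\L^\infty$ that is \emph{partially reduced} with respect to $\L$ lies \emph{in} $(\L):H_\L^\infty$; it does not exclude reduced elements from the ideal. Moreover, Remark~\ref{cont} is stated for a $\L$ already assumed to be a characteristic set, so invoking it here is circular. In fact Rosenfeld's criterion (Kolchin, Ch.~IV, \S 9) requires a third, independent condition: that $(\L):H_\L^\infty$ contain no nonzero element reduced with respect to $\L$. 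This is not implied by primality together with autoreducedness and coherence. For a concrete (purely algebraic, $\D=\emptyset$) counterexample, take $\L=\{x_1^2,\ x_2-x_1\}$. This is autoreduced and vacuously coherent, and $(\L):H_\L^\infty=(x_1^2,x_2-x_1):x_1^\infty=(x_1,x_2)$ is prime; yet $x_1$ is a nonzero element reduced with respect to $\L$ lying in the saturation, so $\L$ is not the characteristic set of $(x_1,x_2)$ (the set $\{x_1,x_2\}$ has strictly lower rank). The omission is repairable: the ``no nonzero reduced element'' condition is itself first-order definable once one fixes uniform degree bounds (from the same effective-bounds machinery you already invoke) on the reduced witnesses that would need to be excluded. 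You should add this as an explicit conjunct rather than try to derive it from the other conditions.
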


Let us comment that Tressl's proof is essentially an application of Rosenfeld's criterion (\cite{Ko}, Chap. IV, \S 9). This criterion reduces the problem of checking if a family $\{f_1,\dots,f_s\}\subset K\{x\}$ is a characteristic set of a prime differential ideal, to the classical problem of checking primality of ideals in polynomial rings in finitely many variables where uniform bounds are well known \cite{VS}.

\

\section{The model companion}

In this section we show that the theory of partial differential fields of characteristic zero with an automorphism has a model companion. The scheme of axioms we use are in terms of characteristic sets of prime differential ideals. We carry on the differential algebraic notation and terminology from the previous section.

We work in the language of differential rings with $m$ derivations and an endomorphism $\mathcal L_{m,\s}$. We denote by $DF_{0,m,\s}$ the theory of differential fields of characteristic zero with $m$ commuting derivations and an automorphism commuting with the derivations. By a differential-difference field we mean a field $K$ equipped with a set of derivations $\D=\{\d_1,\dots,\d_m\}$ and an automorphism $\s$ such that $(K,\D,\s)\models DF_{0,m,\s}$. In particular, our difference fields are all inversive.

We will use the following geometric characterization of the existentially closed models of $DF_{0,m,\s}$. 

\begin{fact}\label{GR}
Let $(K,\D,\s)$ be a differential-difference field. Then $(K,\D,\s)$ is existentially closed if and only if the following conditions hold
\begin{enumerate}
\item [(i)] $(K,\D)\models DCF_{0,m}$
\item [(ii)] Suppose $V$ and $W$ are irreducible $\D$-closed sets such that $W\subseteq V\times V^\s$ and $W$ projects $\D$-dominantly onto both $V$ and $V^\s$. If $O_V$ and $O_W$ are nonempty $\D$-open sets of $V$ and $W$, respectively, then there is $a\in O_V$ such that $(a,\s a)\in O_W$.
\end{enumerate} 
\end{fact}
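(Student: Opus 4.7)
The plan is to prove both directions of the biconditional. The key ingredient in both is a \emph{differential-difference amalgamation lemma}: given a differential field extension $L\supseteq K$ together with a partial differential isomorphism $\tau$ between subfields of $L$ extending $\sigma\colon K\to K$, one can extend $\tau$ to an automorphism $\sigma^*$ of some differential-difference field extension $(L^*,\D,\sigma^*)$ of $(K,\D,\sigma)$ with $L\subseteq L^*$. This is built by iteratively constructing a tower $L=L^{(0)}\subseteq L^{(1)}\subseteq\cdots$ with compatible differential embeddings $\sigma^{(i)}\colon L^{(i)}\to L^{(i+1)}$ extending $\tau$, doing the analogous construction in the reverse direction to ensure inversivity, and taking the union. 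Freeness of differential field extensions in characteristic zero underpins this construction.

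For the forward direction, assume $(K,\D,\sigma)$ is existentially closed. For (i), given a quantifier-free $\mathcal{L}_m$-formula satisfiable in a differential field extension of $K$, the amalgamation lemma (applied with $\tau=\sigma$ on $K$) produces a differential-difference extension in which the formula is satisfied, and existential closedness pulls back a realization to $K$; hence $(K,\D)\models DCF_{0,m}$. For (ii), let $V,W,O_V,O_W$ be as stated. Pick a generic point $(a,b)$ of $W$ over $K$ in some differential field extension; dominance forces $a$ to be generic in $V$ and $b$ to be generic in $V^\sigma$. Since the differential function field of $V^\sigma$ is obtained from that of $V$ by applying $\sigma$ to the coefficients, there is a differential field isomorphism $\tau\colon K\langle a\rangle\to K\langle b\rangle$ extending $\sigma$ with $\tau(a)=b$. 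Amalgamate to obtain $(L^*,\D,\sigma^*)\supseteq(K,\D,\sigma)$ with $\sigma^*(a)=b$. Because $(a,\sigma^*a)=(a,b)$ is generic in $W$ and $(O_V, O_W)$ are nonempty $\D$-opens, we have $a\in O_V$ and $(a,\sigma^*a)\in O_W$. The quantifier-free $\mathcal{L}_{m,\sigma}$-formula ``$x\in O_V\wedge (x,\sigma x)\in O_W$'' is therefore satisfiable in an extension, and existential closedness yields a realization in $K$.

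For the backward direction, assume (i) and (ii), and let $\phi(x)$ be a quantifier-free $\mathcal{L}_{m,\sigma}$-formula realized by some $c$ in a differential-difference extension. Using inversivity, after composing $x$ with a suitable power of $\sigma^{-1}$, rewrite $\phi(x)=\phi_0(x,\sigma x,\ldots,\sigma^n x)$ for some quantifier-free $\mathcal{L}_m$-formula $\phi_0$. Introduce the bundled variable $z=(x,\sigma x,\ldots,\sigma^{n-1}x)$, let $V$ be the $\D$-locus of $(c,\sigma c,\ldots,\sigma^{n-1}c)$ over $K$, and let $W$ be the $\D$-locus of $\bigl((c,\ldots,\sigma^{n-1}c),(\sigma c,\ldots,\sigma^n c)\bigr)$ over $K$. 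Then $W\subseteq V\times V^\sigma$, and $W$ projects $\D$-dominantly onto both factors (the two projections send the generic of $W$ to the generics of $V$ and $V^\sigma$). Viewed as a condition on $(z,\sigma z)$, the formula $\phi_0$ holds at the generic point of $W$, hence defines a set containing a nonempty $\D$-open $O_W\subseteq W$. Apply (ii) with $O_V=V$ to obtain $a\in V$ with $(a,\sigma a)\in O_W$. The shift-matching relations $(\sigma z)_i=z_{i+1}$ for $i<n-1$ hold at the generic of $W$ and therefore on all of $W$, which forces $a=(a_0,\sigma a_0,\ldots,\sigma^{n-1}a_0)$ and $\sigma a=(\sigma a_0,\ldots,\sigma^n a_0)$; thus $\phi_0(a_0,\sigma a_0,\ldots,\sigma^n a_0)$ holds, i.e., $\phi(a_0)$ is realized in $K$.

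The central obstacle is the mismatch between condition (ii), which encodes only a single-step shift $V\to V^\sigma$, and arbitrary quantifier-free $\mathcal{L}_{m,\sigma}$-formulas, which may involve many iterates of $\sigma$. The resolution is the bundling trick of replacing the iterated shift by a one-step shift on the longer tuple $(x,\sigma x,\ldots,\sigma^{n-1}x)$, reducing the general case to the hypothesis. A secondary technicality is verifying that the amalgamation can be made inversive, which requires building the tower symmetrically in positive and negative powers of $\sigma$.
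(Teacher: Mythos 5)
Your proof is correct and follows the same overall strategy as the paper's, most importantly the ``bundling'' trick in the backward direction (introducing $z=(x,\s x,\ldots,\s^{n-1}x)$ so that the iterated shift becomes a single-step shift), which coincides with the paper's formula $\chi$. The one genuine difference is in how the automorphism is produced in the forward direction of (ii): the paper works inside a sufficiently saturated $(\U,\D)\models DCF_{0,m}$, observes by quantifier elimination that $tp_\D(d/K)=\s(tp_\D(c/K))$, and invokes homogeneity of $\U$ to get $\s'$ extending $\s$ with $\s'(c)=d$, whereas you construct the difference-field extension by hand via an iterated tower (the ``amalgamation lemma''). Both are standard and valid; the saturation argument is shorter given the ambient model theory, while the tower construction is more self-contained. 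Two minor points: since $\mathcal L_{m,\s}$ has only the function symbol $\s$ (not $\s^{-1}$), a quantifier-free $\mathcal L_{m,\s}$-formula already has the form $\phi_0(x,\s x,\ldots,\s^n x)$ with no need to compose with $\s^{-1}$; and when you apply (ii) you should pass from the loci computed in the extension $F$ to their $K$-points, which is where (i) is used (as in the paper, $(K,\D)\models DCF_{0,m}$ guarantees $Y(K)$ still projects $\D$-dominantly onto $X(K)$ and $X^\s(K)$). Note also that by replacing each inequation $p\neq 0$ with the equation $py=1$ in a new variable one can, as the paper does implicitly, reduce to conjunctions of atomic formulas, in which case it suffices to invoke (ii) with $O_V=V$ and $O_W=W$; your version, keeping a nontrivial open $O_W$, handles arbitrary quantifier-free $\phi_0$ directly.
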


If $V$ is a $\D$-closed set then $V^\s$ is the $\D$-closed set with defining differential ideal $\{f^\s:f\in \I(V/K)_\D\}$, where $f^\s$ is the differential polynomial obtained by applying $\s$ to the coefficients of $f$.

\begin{proof}
A proof of this in the case of $O_V=V$ and $O_W=W$ appears in \cite{GR}. The argument given there can easily be adapted (in the usual way), but we include a proof for the sake of completeness.

Suppose $(K,\D,\s)$ is existentially closed and $V$, $W$, $O_V$ and $O_W$ are as in condition (ii). Working in a sufficiently saturated elementary extension $(\U,\D)$ of $(K,\D)$, we can find a $\D$-generic point $(c,d)$ of $W$ over $K$ (i.e., a tuple from $\U$ such that $\I((c,d)/K)_\D=\I(W/K)_\D$). Clearly $(c,d)\in O_W$ and, since $W$ projects $\D$-dominantly onto $V$ and $V^s$, $c$ and $d$ are generic points of $V$ and $V^\s$, respectively, over $K$. Thus, $c\in O_V$. Because $DCF_{0,m}$ has quantifier elimination we have that $tp_{\D}(d/K)=\sigma(tp_\D(c/K))$, where $tp_\D(a/K)$ denotes the type of $a$ over $K$ in the language of differential rings $\mathcal{L}_m$. Hence, there is an automorphism $\s'$ of $(\U,\D)$ extending $\s$ such that $\s'(c)=d$. Since $(K,\D,\s)$ is existentially closed we can find a point in $K$ with the desired properties.

Now suppose conditions (i) and (ii) are satisfied. Let $\phi(x)$ be a conjunction of atomic $\mathcal L_{m,\s}$-formulas over $K$. Suppose $\phi$ has a realisation $a$ in some differential-difference field $(F,\D,\s)$ extending $(K,\D,\s)$. Let 
$$\phi(x)=\psi(x,\s x,\dots,\s^r x)$$
 where $\psi$ is a conjunction of atomic $\mathcal L_{m}$-formulas over $K$ and $r>0$. Let $b$ be the tuple $(a,\s a,\dots,\s^{r-1}a)$, $X$ be the $\D$-locus of $b$ in $F^{nr}$ over $K$, and $Y$ be the $\D$-locus of $(b,\s b)$ in $F^{2nr}$ over $K$. Let 
$$\chi(x_0,\dots,x_{r-1},y_0,\dots, y_{r-1}):= \psi(x_0,\dots,x_{r-1},y_{r-1})\wedge \left(\bigwedge_{i=1}^{r-1} x_i=y_{i-1}\right)$$
 then $\chi$ is realised by $(b,\s b)$. Since $(b,\s b)$ is a $\D$-generic point of $Y$, $b$ is a $\D$-generic point of $X$ and $\s b$ is a $\D$-generic point of $X^\s$, over $K$, we have that $Y$ projects $\D$-dominantly onto both $X$ and $X^\s$ over $K$. Thus, since $(K,\D)\models DCF_{0,m}$, $Y(K)$ projects $\D$-dominantly onto both $X(K)$ and $X^\s(K)$. Applying (2) with $V=O_V=X(K)$ and $W=O_W=Y(K)$, there is $c$ in $V$ such that $(c,\s c)\in W$. Let $c=(c_0,\dots,c_{r-1})$, then $(c_0,\dots,c_{r-1},\s c_0,\dots, \s c_{r-1})$ realises $\chi$. Thus, $(c_0,\s c_0,\dots,\s^r c_0)$ realises $\psi$. Hence, $c_0$ is a tuple from $K$ realising $\phi$. This proves $(K,\D,\s)$ is existentially closed.
\end{proof}

\begin{remark}\label{rem2}
It is not known if condition (ii) is expressible in a first order way. One of the problems lies on the fact that it is not known if differential irreducibility is a definable condition. That is, given a differential algebraic family of $\D$-closed sets $V_u$ in a differentially closed field $(K,\D)$, it is not known if the set 
$$\{a\in K^r\colon V_a \text{ is irreducible}\}$$
is definable (see \cite{GKO} and \cite{HKM} for partial results around this problem). The other problem is that it is not known if differential dominance onto irreducible $\D$-closed sets is a definable condition. That is, given a differential algebraic family $f_{u,v}: W_v\to V_u$ of differential polynomial maps between irreducible $\D$-closed sets in a differentially closed field $(K,\D)$, it is not known if the set 
$$\{(a,b)\in K^r\times K^s\colon f_{a,b} \text{ is $\D$-dominant}\}$$
is definable.
\end{remark}

We bypass these definability issues by showing a new characterization of the existentially closed models in terms of characteristic sets of prime differential ideals. Recall, from Section \ref{notat}, that if $\L$ is a characteristic set of a prime differential ideal  we let $\V^*(\L)=\V(\L)\setminus\V(H_\L)$.

\begin{theorem}\label{main}
Let $(K,\D,\s)$ be a differential-difference field. Then $(K,\D,\s)$ is existentially closed if and only if the following conditions hold
\begin{enumerate}
\item $(K,\D)\models DCF_{0,m}$
\item Suppose $\L$ and $\Ga$ are characteristic sets of prime differential ideals of $K\{x\}$ and $K\{x,y\}$, respectively, such that $$\V^*(\Ga)\subseteq \V(\L)\times \V(\L^\s).$$ Suppose $O$ and $Q$ are nonempty $\D$-open subsets of $\V^*(\L)$ and $\V^*(\L^\s)$, respectively, such that $O\subseteq \pi_x(\V^*(\Ga))$ and $Q\subseteq \pi_y(\V^*(\Ga))$. Then there is $a\in \V^*(\L)$ such that $(a,\s a)\in \V^*(\Ga)$.
\end{enumerate}
\end{theorem}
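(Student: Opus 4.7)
The proof splits into the two implications. For the forward direction, condition~(1) is immediate from Fact~\ref{GR}(i), and for condition~(2) I invoke Fact~\ref{GR}(ii) after attaching to each characteristic set its irreducible $\D$-closed set via Proposition~\ref{basic}. Given $\L$, $\Ga$ as in~(2), set $V := \V([\L]:H_\L^\infty)$ and $W := \V([\Ga]:H_\Ga^\infty)$. Since $\V^*(\Ga)$ is dense in $W$, the inclusion $\V^*(\Ga) \subseteq \V(\L) \times \V(\L^\s)$ passes to $W \subseteq \V(\L) \times \V(\L^\s)$. The given opens $O$ and $Q$ then upgrade this to $W \subseteq V \times V^\s$ with $\D$-dominant projections: $\overline{\pi_x(W)}$ is irreducible $\D$-closed in $\V(\L)$ and contains the nonempty $\D$-open $O$, which is dense in $V$ by irreducibility, so by Proposition~\ref{basic} ($V$ is an irreducible component of $\V(\L)$) we obtain $\overline{\pi_x(W)} = V$; symmetrically $\overline{\pi_y(W)} = V^\s$. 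Fact~\ref{GR}(ii) applied with $O_V := O$ and $O_W := \V^*(\Ga)$ delivers $a \in O \subseteq \V^*(\L)$ with $(a, \s a) \in \V^*(\Ga)$.

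For the converse, I assume (1) and (2) and verify Fact~\ref{GR}(ii). Given $V$, $W$, $O_V$, $O_W$ there, reduce (by shrinking to a nonempty principal open, using irreducibility of $V$ and $W$) to the case $O_V = V \setminus \V(g)$ and $O_W = W \setminus \V(h)$ for single differential polynomials $g$, $h$. The plan is to encode both opens as a single closed localization in higher-dimensional affine space, symmetrically split between the two factors: set
\[
\tilde V := \{(x,t,s) \in V \times K^2 : g(x)\, t = 1\},
\]
let $\tilde V^\s$ be its $\s$-transform (coordinates $(y,t',s')$, defining equation $g^\s(y)\, t' = 1$), and
\[
\tilde W := \{(x,t,s,y,t',s') \in \tilde V \times \tilde V^\s : (x,y) \in W,\ h(x,y)\, s\, s' = 1\}.
\]
A direct computation with the corresponding coordinate rings shows that $\tilde V$, $\tilde V^\s$, $\tilde W$ are irreducible and $\D$-closed, that $\tilde W \subseteq \tilde V \times \tilde V^\s$, and that $\tilde W$ projects $\D$-dominantly onto both factors (the free variables $s$ and $s'$ in $\tilde V$ and $\tilde V^\s$ can be adjusted above a generic point of each target so as to satisfy $hss' = 1$). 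Take $\L$ and $\Ga$ to be characteristic sets of $\I(\tilde V/K)_\D$ and $\I(\tilde W/K)_\D$ respectively. The inclusion $\V^*(\Ga) \subseteq \V(\L) \times \V(\L^\s)$ follows immediately from $\tilde W \subseteq \tilde V \times \tilde V^\s$ and Proposition~\ref{basic}, while the $\D$-dominance, combined with $\V^*(\L)$ being a nonempty $\D$-open of the irreducible $\tilde V$, produces the required nonempty $\D$-opens $O \subseteq \V^*(\L) \cap \pi_x(\V^*(\Ga))$ and $Q \subseteq \V^*(\L^\s) \cap \pi_y(\V^*(\Ga))$. Condition~(2) then yields $a' = (a,t,s) \in \V^*(\L)$ with $(a', \s a') \in \V^*(\Ga)$; the defining equations of $\tilde V$ and $\tilde W$ force $g(a) \ne 0$ and $h(a,\s a) \ne 0$, i.e.\ $a \in O_V$ and $(a, \s a) \in O_W$.

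The main obstacle is the design of $\tilde W$ in the converse direction: the non-vanishing condition $h(x,y) \ne 0$ mixes the $x$- and $y$-coordinates, and it is not immediately clear how to encode it as a closed subvariety of a product $\tilde V \times \tilde V^\s$ without destroying either the irreducibility of $\tilde W$ or the $\D$-dominance of $\tilde W$ onto each factor. The symmetrizing equation $h(x,y)\, s\, s' = 1$, with the auxiliary variable $s$ placed on the $\tilde V$-side and $s'$ on the $\tilde V^\s$-side, is the crux---it is a single closed equation in the product that preserves every structural property needed to apply condition~(2).
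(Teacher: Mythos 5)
Your proof is correct, and the forward direction follows the paper's argument closely (you pass from characteristic sets to the irreducible loci $V = \V([\L]:H_\L^\infty)$ and $W = \V([\Ga]:H_\Ga^\infty)$ via Proposition~\ref{basic}, deduce $W \subseteq V \times V^\s$ and $\D$-dominance of the projections from the containments $O \subseteq \pi_x(\V^*(\Ga))$, $Q \subseteq \pi_y(\V^*(\Ga))$, and apply Fact~\ref{GR}(ii)). The minor difference of using closures of projections rather than the paper's argument with irreducible components of $\V(\L)$, $\V(\L^\s)$ is immaterial.

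The converse direction, however, is genuinely different. The paper observes that it suffices to verify Fact~\ref{GR}(ii) only in the special case $O_V = V$, $O_W = W$, because the proof of the ``if'' direction of Fact~\ref{GR} never invokes general opens (it always applies (ii) with $O_V = V = X(K)$, $O_W = W = Y(K)$). This reduction makes the converse short: take characteristic sets of $\I(V/K)_\D$ and $\I(W/K)_\D$, and read off the required $\D$-opens $O$, $Q$ from quantifier elimination in $DCF_{0,m}$. You, by contrast, treat arbitrary $O_V$, $O_W$ head-on, shrinking to principal opens and then encoding the non-vanishing conditions $g(x) \neq 0$ and $h(x,y) \neq 0$ as closed conditions via a differential Rabinowitch trick; the clever point, which you rightly flag, is to split the inverse of $h$ across the two factors as $s \cdot s'$ with $s$ a free differential indeterminate on the $\tilde V$-side and $s'$ on the $\tilde V^\s$-side, so that $\tilde W$ remains irreducible and contained in $\tilde V \times \tilde V^\s$ while still projecting $\D$-dominantly onto each factor (the free variable $s$ in $\tilde V$ and the localization guarantee this). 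The trade-off: your argument is more self-contained, relying only on the public statement of Fact~\ref{GR} rather than on the internals of its proof, but it is also more elaborate and requires verifying irreducibility and dominance for the auxiliary $\D$-varieties $\tilde V$, $\tilde W$. Both approaches are sound; the paper's is the more economical one once one is willing to inspect the proof of Fact~\ref{GR}, while yours would be the approach of necessity if Fact~\ref{GR} were an external black box.
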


Here, if $\L=\{f_1,\dots,f_s\}$, then $\L^\s=\{f_1^\s,\dots,f_s^\s\}$, and $\pi_x$ and $\pi_y$ denote the canonical projections from $\V(\L)\times\V(\L^\s)$ onto $\V(\L)$ and $\V(\L^\s)$, respectively.

\begin{proof}
Suppose $(K,\D,\s)$ is existentially closed, and let $\L$, $\Ga$, $O$ and $Q$ be as in condition (2). Then $\L$ is a characteristic set of the prime differential ideal $\P=[\L]:H_\L^\infty$ and, by Proposition~\ref{basic}, $\V^*(\L)=\V(\P)\setminus \V(H_\L)$. So $O_V:=\V^*(\L)$ is a nonempty $\D$-open subset of the irreducible $\D$-closed set $V:=\V(\P)$. Similarly, $O_W:=\V^*(\Ga)$ is a nonempty $\D$-open subset of the irreducible $\D$-closed set $W:=\V\left([\Ga]:H_\Ga^\infty\right)$.

Next we show that $W\subseteq V\times V^\s$. By Proposition~\ref{basic} $$W\setminus \V(H_\Ga)=\V^*(\Ga)\subseteq \V(\L)\times \V(\L^\s),$$ so that by taking $\D$-closures $W\subseteq \V(\L)\times\V(\L^\s)$. Since $W$ is irreducible it must be contained in some $X\times Y$ where $X$ and $Y$ are irreducible components of $\V(\L)$ and $\V(\L^\s)$, respectively. Hence $O\subseteq\pi_x(W)\subseteq X$. On the other hand, $O\subseteq O_V\subset V$, and so $V=X$. Similarly, since $V^\s=\V([\L^\s]:H_{\L^\s}^{\infty})$ and working with $Q$ rather than $O$, we get $Y=V^\s$. Therefore, $W\subseteq V\times V^\s$. Since $O\subseteq \pi_x(W)$ and $Q\subseteq \pi_y(W)$, $W$ projects $\D$-dominantly onto both $V$ and $V^\s$. Applying (ii) of Fact~\ref{GR} to $V$, $W$, $O_V$, $O_W$, we get $a\in O_V$ such that $(a,\s a)\in O_W$, as desired.

For the converse we assume that (2) holds and aim to prove condition (ii) of Fact~\ref{GR}. In fact, it suffices to prove this statement in the case when $O_V=V$ and $O_W=W$ (see the comment at the beginning of the proof of Fact \ref{GR}). We thus have irreducible $\D$-closed sets $V$ and $W\subseteq V\times V^\s$ such that $W$ projects $\D$-dominantly onto both $V$ and $V^\s$, and we show that there is $a\in V$ such that $(a,\s a)\in W$. Let $\L$ and $\Ga$ be characteristic sets of $\I(V/K)_\D$ and $\I(W/K)_\D$, respectively. Then, by Proposition~\ref{basic} $$\V^*(\Ga)=W\setminus\V(H_\Ga)\subseteq V\times V^\s\subseteq \V(\L)\times\V(\L^\s).$$ Since $W$ projects $\D$-dominantly onto $V$ and $V^\s$, $\V^*(\Ga)$ projects $\D$-dominantly onto both $V$ and $V^\s$. Thus, by quantifier elimination for $DCF_{0,m}$, there are nonempty $\D$-open sets $O$ and $Q$ of $\V^*(\L)$ and $\V^*(\L^\s)$,  respectively, such that $O\subseteq \pi_x(\V^*(\Ga))$ and $Q\subseteq \pi_y(\V^*(\Ga))$. We are in the situation of 
condition~(2), and there is $a\in \V^*(\L)\subseteq V$ such that $(a,\s a)\in \V^*(\Ga)\subseteq W$.
\end{proof}

What the above proof shows is that, in a differentially closed field, each instance of condition (ii) of Fact~\ref{GR} is equivalent to an instance of condition (2) of Theorem~\ref{main}. This is accomplished by the passing from prime differential ideals to their characteristics sets, and from $\D$-dominant projections to containment of a nonempty $\D$-open set. However, these two characterizations have a very different flavour in terms of first order axiomatizability. More precisely, our new characterization has the advantage that condition (2) is expressible in a first order way. Indeed, suppose $(K,\D)\models DCF_{0,m}$ and let $\{f_1(u,x),\ldots,f_s(u,x)\}\subset \mathbb{Q}[u]\{x\}$, where $u=(u_1,\ldots,u_r)$ are (algebraic) indeterminates; while it is not known if the set 
$$\{a\in K^r\colon \V(f_1(a,x),\ldots,f_s(a,x)) \text{ is irreducible}\}$$
is definable, Fact \ref{chr} tells us precisely that
$$\{a\in K^r\colon \{f_1(a,x)\ldots,f_s(a,x)\} \text{ is a characteristic set of a prime $\D$-ideal}\}$$
is in fact definable. It follows easily now that condition (2) is first order. (Observe that by introducing the $\D$-open sets $O$ and $Q$ of condition (2) in our scheme of axioms we have avoided the issue of $\D$-dominance mentioned in Remark \ref{rem2}.)

By the above discussion, Theorem~\ref{main} and Fact~\ref{chr} imply the following:

\begin{corollary}\label{mc}
The theory $DF_{0,m,\s}$ has a model companion. 
\end{corollary}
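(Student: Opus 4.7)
The plan is to combine Theorem~\ref{main} with Fact~\ref{chr} to show that the class of existentially closed models of $DF_{0,m,\s}$ is elementary; this is equivalent to $DF_{0,m,\s}$ admitting a model companion. Condition~(1) of Theorem~\ref{main} is axiomatized by McGrail's theory $DCF_{0,m}$, so the task reduces to writing condition~(2) as an $\mathcal L_{m,\s}$-axiom scheme. The main work has already been done in proving Theorem~\ref{main}; what remains is essentially a syntactic verification.

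Concretely, I would index the scheme by finite templates of differential polynomials with indeterminate coefficients: tuples $f_1(u,x),\dots,f_s(u,x)\in \QQ[u]\{x\}$ and $g_1(v,x,y),\dots,g_t(v,x,y)\in \QQ[v]\{x,y\}$ describing the candidate $\L$ and $\Ga$, together with $h_1(w,x),\dots,h_p(w,x)$ and $l_1(z,y),\dots,l_q(z,y)$ describing the $\D$-opens cutting out $O$ and $Q$ from $\V^*(\L)$ and $\V^*(\L^\s)$ respectively. For each template, the corresponding axiom asserts: for all parameter values $u,v,w,z$, if the tuples satisfy the hypotheses of Theorem~\ref{main}(2) for the induced $(\L,\Ga,O,Q)$, then there exists $a\in O$ with $(a,\s a)\in \V^*(\Ga)$. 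Any actual instance of condition~(2) over a model $K$ realizes such a template with parameter values in $K$, so the scheme collectively captures exactly condition~(2).

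The only real content is verifying that each hypothesis in each axiom is first-order in $\mathcal L_{m,\s}$, and this is where the potential obstacle lies---indeed, Remark~\ref{rem2} explains precisely why a naive first-order rendering of Fact~\ref{GR} fails. By Fact~\ref{chr}, however, ``$\{f_i(u,x)\}$ is a characteristic set of a prime $\D$-ideal'' is $\mathcal L_m$-definable in $u$, and likewise for $\{g_j(v,x,y)\}$ in $v$; this is the key input that bypasses the definability obstructions around $\D$-irreducibility and $\D$-dominance. The inclusion $\V^*(\Ga)\subseteq \V(\L)\times \V(\L^\s)$ is a universally-quantified implication in $(x,y)$, using $\s(u)$ in place of $u$ to describe $\L^\s$---this is the one place the endomorphism symbol $\s$ is invoked. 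Non-emptiness of $O,Q$ together with the containments $O\subseteq \pi_x(\V^*(\Ga))$ and $Q\subseteq \pi_y(\V^*(\Ga))$ are routine $\exists$ and $\forall\exists$ statements, and the conclusion is existential. Hence each axiom is a genuine first-order $\mathcal L_{m,\s}$-sentence. Together with the axioms of $DF_{0,m,\s}$, this scheme axiomatizes exactly the existentially closed differential-difference fields by Theorem~\ref{main}; since $DF_{0,m,\s}$ is inductive, every model embeds into an existentially closed one, and the resulting theory is the desired model companion $DCF_{0,m}A$.
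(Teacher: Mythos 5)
Your proposal is correct and follows essentially the same route as the paper: combine Theorem~\ref{main} with Fact~\ref{chr}, observe that the only non-routine definability issue is being a characteristic set of a prime differential ideal, and note that the rest of condition~(2) is routinely first-order; since $DF_{0,m,\s}$ is inductive, axiomatizing the existentially closed models yields the model companion. The one small deviation is that your axiom scheme concludes with a witness $a\in O$ rather than the theorem's $a\in\V^*(\L)$; this is a harmless strengthening, since applying Fact~\ref{GR}(ii) with $O_V=O$ (a nonempty $\D$-open subset of $V$) still produces such a point in any existentially closed model, and the stronger scheme a fortiori implies condition~(2).
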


Henceforth we denote this model companion by $DCF_{0,m}A$.

\

\section{Basic model theory of $DCF_{0,m}A$}

In this section we present some of the model theoretic properties of the theory $DCF_{0,m}A$. Many of these results are consequences of the work of Chatzidakis and Pillay in \cite{CP} or more or less immediate adaptations of the arguments from \cite{Bu} or \S5 of \cite{MS}.

Let $(K,\D,\s)$ be a differential-difference field and $A\subseteq K$. The differential-difference field generated by $A$, denoted by $\l A\r$, is the smallest differential-difference subfield of $K$ containing $A$. Note that $\l A\r$ is simply the subfield of $K$ generated by 
$$\{\d_m^{e_m}\cdots \d_1^{e_1}\s^k a \colon \, a\in A, e_i< \omega, k\in \mathbb{Z}\}.$$ 
If $k$ is a differential-difference subfield of $K$, we write $k\l B\r$ instead of $\l k\cup B\r$.

The following are consequences of \S3 of \cite{CP} and the fact that the model companion $DCF_{0,m}A$ exists:

\begin{proposition}\label{propert}
Let $(K,\D,\s)$ and $(L,\D',\s')$ be models of $DCF_{0,m}A$.
\begin{itemize}
\item [(i)] If $A\subseteq K$, then $\operatorname{acl}(A)=\l A\r ^{alg}$.
\item [(ii)] Suppose $K$ and $L$ have a common algebraically closed differential-difference subfield $F$, then $(K,\D,\s)\equiv_F(L,\D',\s')$. In particular, the completions of $DCF_{0,m}A$ are determined by the difference field structure on $\mathbb{Q}^{alg}$.
\item [(iii)] Suppose $k$ is a differential-difference subfield of $K$ and $a$, $b$ are tuples from $K$. Then $tp(a/k) = tp(b/k)$ if and only if there is an $k$-isomorphism from $(k\l a\r ^{alg},\D,\s)$ to $(k\l b\r^{alg},\D,\s)$ sending $a$ to $b$.
\item [(iv)] Every $\mathcal{L}_{m,\s}$-formula $\phi(x_1,\dots,x_n)$ is equivalent, modulo $DCF_{0,m}A$, to a
disjunction of formulas of the form $\exists y \psi(x_1, \dots, x_n, y)$, where $\psi$ is quantifier free, and such that for every tuple $(a_1,\dots,a_n,b)$ if $\psi(a_1,\dots,a_n,b)$ holds then $b\in \l a_1,\dots,a_n\r ^{alg}$.
\item [(v)] Every completion of $DCF_{0,m}A$ is supersimple. Moreover, if $(K, \D, \s)$ is sufficiently saturated and $A$, $B$, $C$ are small subsets of $K$, then $A \ind_C B$ if and only if $\l A \cup C\r$ is algebraically disjoint from $\l B \cup C\r$ over $\l C\r$.
\end{itemize}
\end{proposition}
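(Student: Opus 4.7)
The plan is to deduce (i)--(v) uniformly from the machinery developed by Chatzidakis and Pillay in \S 3 of \cite{CP}, which applies whenever the base theory $T$ is stable with quantifier elimination and elimination of imaginaries, and the expansion $T_\s$ of $T$ by a distinguished automorphism admits a model companion $TA$. In our setting $T$ is $DCF_{0,m}$, which by McGrail is $\omega$-stable with QE and EI; $T_\s$ is $DF_{0,m,\s}$; and the model companion $TA=DCF_{0,m}A$ exists by Corollary~\ref{mc}. So the hypotheses of \cite{CP} are in place, and the task reduces to translating each of its conclusions into the concrete differential-difference vocabulary.

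For (i), algebraic closure in $DCF_{0,m}$ equals the definable closure (the differential field generated) followed by field-theoretic algebraic closure, as recalled in Section~\ref{notat}; feeding this into the identity $\operatorname{acl}_{TA}(A)=\operatorname{acl}_T\bigl(\bigcup_{k\in\mathbb{Z}}\s^k A\bigr)$ from \cite{CP} yields $\operatorname{acl}(A)=\langle A\rangle^{alg}$. Part (iii) is the description of types from \cite{CP}, which I would prove by a back-and-forth driven by the axiom scheme of Theorem~\ref{main}: given a partial $\s$-equivariant differential-field isomorphism between finitely generated algebraically closed differential-difference subfields, one extends it by one more element at a time by using our axioms to realise the relevant characteristic-set configuration on the other side. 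Part (iv) then follows by compactness: by (iii), each complete type extending a given quantifier-free formula $\phi(\bar x)$ is determined by the isomorphism type of $\operatorname{acl}(\bar x)$ over $\bar x$, and each such isomorphism type can be encoded by an $\exists y\,\psi$ formula with $y$ field-theoretically algebraic over $\bar x$.

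For (ii), let $K,L\models DCF_{0,m}A$ share the algebraically closed differential-difference subfield $F$. Passing to sufficiently saturated elementary extensions and running a back-and-forth driven by (iii) produces an $F$-elementary map between them, yielding $(K,\D,\s)\equiv_F(L,\D',\s')$. The classification of completions by the difference-field structure on $\mathbb{Q}^{alg}$ is then immediate because $\mathbb{Q}^{alg}$, equipped with trivial derivations and the restriction of $\s$, is the prime algebraically closed differential-difference subfield of any model of $DCF_{0,m}A$.

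Finally, (v) is the specialisation of the main results of \cite{CP}: supersimplicity and the algebraic-disjointness characterisation of non-forking follow from that framework once one checks that $DCF_{0,m}$ satisfies the stable-embeddedness hypotheses used in the back-and-forth (automatic from $\omega$-stability with EI) and that $TA$ eliminates $\exists^\infty$ (automatic from Noetherianity of the Kolchin topology together with (iv)). The main obstacle throughout is not conceptual but notational: one must carefully transcribe the $\s$-algebraic closure and algebraic-disjointness notions of \cite{CP} into their differential-difference meanings, and verify at each back-and-forth step in (ii) and (iii) that the axiom scheme of Theorem~\ref{main} is exactly what supplies the required one-element extensions.
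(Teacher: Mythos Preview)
Your approach is correct and essentially identical to the paper's: the paper simply states that (i)--(v) are consequences of \S3 of \cite{CP} together with the existence of the model companion (Corollary~\ref{mc}), without spelling out the translation you provide. Your elaboration is accurate, though the back-and-forth in (iii) and the elimination of $\exists^\infty$ in (v) are not additional verifications you need to perform---they are already subsumed in the Chatzidakis--Pillay framework once $T=DCF_{0,m}$ is known to be $\omega$-stable with QE and EI and $TA$ exists.
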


By (v) of Proposition~\ref{propert}, $DCF_{0,m}A$ satisfies the independence theorem over models. Furthermore, by adapting the proof of Theorem 3.31 of \cite{Bu} or of Theorem 5.9 of \cite{MS}, one can show that $DCF_{0,m}A$ satisfies the independence theorem over algebraically closed differential-difference fields. That is, 

\begin{proposition}\label{indal}
Let $(K,\D,\s)$ be a sufficiently saturated model of $DCF_{0,m}A$. Suppose $F$ is a (small) algebraically closed differential-difference subfield of $K$, and 
\begin{enumerate}
\item [(i)] $A$ and $B$ are (small) supersets of $F$ with $A\ind_F B$, and
\item [(ii)] $a$ and $b$ are tuples such that $tp(a/F)=tp(b/F)$ and $a\ind_F A$ and $b\ind_F B$.
\end{enumerate}
Then there is $d\ind_F A\cup B$ with $tp(d/A)=tp(a/A)$ and $tp(d/B)=tp(b/B)$.
\end{proposition}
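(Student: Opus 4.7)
The argument adapts Theorem~3.31 of \cite{Bu} and Theorem~5.9 of \cite{MS}, both of which follow \S 1 of \cite{CH}. By Proposition~\ref{propert}(i) and (v), we may replace $A$ and $B$ by $\l A\r^{alg}$ and $\l B\r^{alg}$, and hence assume they are themselves algebraically closed differential-difference subfields of $K$. Proposition~\ref{propert}(iii) applied to $tp(a/F)=tp(b/F)$ yields a $(\D,\s)$-isomorphism $\phi\colon F\l a\r^{alg}\to F\l b\r^{alg}$ over $F$ sending $a$ to $b$, and Proposition~\ref{propert}(v) translates the three independence hypotheses into pairwise algebraic disjointness over the algebraically closed field $F$ of the pairs $(A,B)$, $(F\l a\r^{alg},A)$, and $(F\l b\r^{alg},B)$.

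The core step is to construct a differential-difference field $E$ extending $A\cdot B$ and containing an element $d$ such that $F\l d\r^{alg}$ is algebraically disjoint from $A\cdot B$ over $F$, there is a $(\D,\s)$-isomorphism $A\l d\r^{alg}\to A\l a\r^{alg}$ over $A$ sending $d\mapsto a$, and there is a $(\D,\s)$-isomorphism $B\l d\r^{alg}\to B\l b\r^{alg}$ over $B$ sending $d\mapsto b$. To build $E$, regard $a$ together with its $\s$-orbit as an infinite tuple and use $\omega$-stability of $DCF_{0,m}$ to realise the corresponding $\mathcal{L}_m$-type over $F$ by a tuple over $A\cdot B$ that is differentially algebraically disjoint from $A\cdot B$ over $F$. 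Let $d$ be the distinguished entry; transporting the action of $\s$ via the induced differential $F$-isomorphism ($d\mapsto a$) makes the differential field generated by $F$ and $d$ into a differential-difference field $F\l d\r$ that is $F$-isomorphic to $F\l a\r$. The three fields $A$, $B$, $F\l d\r$ are then pairwise algebraically disjoint over the algebraically closed $F$, so the fraction field of $A\otimes_F B\otimes_F F\l d\r$ is a well-defined field, on which the three given automorphisms glue uniquely to a field automorphism commuting with the (unique) common extension of the derivations. Taking any algebraic closure with an arbitrary extension of $\s$ produces $E$.

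The equalities $tp(d/A)=tp(a/A)$ and $tp(d/B)=tp(b/B)$ are then read off from Proposition~\ref{propert}(iii). The differential-difference field $A\cdot F\l d\r$ in $E$ is canonically $A$-$(\D,\s)$-isomorphic via $d\mapsto a$ to $A\cdot F\l a\r$ in $K$, both being fraction fields of $A\otimes_F F\l a\r$ with the same derivation and automorphism data. Any two extensions of $\s$ to any algebraic closure of this common base are Galois-conjugate, and conjugation promotes the isomorphism to a $(\D,\s)$-isomorphism $A\l d\r^{alg}\to A\l a\r^{alg}$ sending $d\mapsto a$. The argument over $B$ is symmetric, with $\phi$ used to identify $F\l d\r^{alg}$ with $F\l b\r^{alg}$. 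Proposition~\ref{propert}(v) together with the algebraic disjointness built into the construction then gives $d\ind_F A\cup B$. Finally, model-completeness of $DCF_{0,m}A$ and the sufficient saturation of $K$ permit an embedding of $E$ into $K$ over $A\cdot B$, yielding the sought element of $K$.

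The principal difficulty lies in the non-uniqueness of extending $\s$ to algebraic closures, which blocks a naive identification of $A\l d\r^{alg}$ with $A\l a\r^{alg}$ (and similarly over $B$). The standard $ACFA$-style remedy is that any two such extensions of $\s$ are Galois-conjugate over the base field, and the conjugating automorphism supplies the required $(\D,\s)$-isomorphism; the derivations extend uniquely throughout and cause no analogous complication.
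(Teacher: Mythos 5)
The paper itself does not give a proof here; it merely cites Theorem~3.31 of \cite{Bu} and Theorem~5.9 of \cite{MS}, so you were essentially being asked to supply the Chatzidakis--Hrushovski-style argument, and your overall architecture is the right one. However, there is a genuine gap at the key step. You claim that, given the $(\D,\s)$-isomorphism $A\cdot F\l d\r\to A\cdot F\l a\r$ over $A$, one can promote it to a $(\D,\s)$-isomorphism $A\l d\r^{alg}\to A\l a\r^{alg}$ because ``any two extensions of $\s$ to an algebraic closure of this common base are Galois-conjugate.'' That assertion is false. Two extensions of $\s$ on a difference field $L$ to $L^{alg}$ need not be conjugate under $\operatorname{Gal}(L^{alg}/L)$: take $L=\QQ$ with $\s=\operatorname{id}$ and compare $\operatorname{id}_{\overline{\QQ}}$ with complex conjugation --- these yield non-isomorphic difference fields over $\QQ$, which is exactly why $ACFA_0$ (and likewise $DCF_{0,m}A$, by Proposition~\ref{propert}(ii)) has more than one completion. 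So beginning with an arbitrary extension of $\s$ on $E'^{alg}$ and then hoping to repair it by conjugation does not work; nothing forces the restriction of your $\s_E$ to $A\l d\r^{alg}$ to be isomorphic over $A$ to $\s_K$ on $A\l a\r^{alg}$. The same problem resurfaces in your final sentence: the embedding of $E$ into $K$ over $A\cdot B$ requires the $\s$-structures on the algebraic closure of $A\cdot B$ computed in $E$ and in $K$ to agree (so that Proposition~\ref{propert}(ii) applies over an algebraically closed common base), and an arbitrary extension of $\s$ does not guarantee this.

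The correct argument does not extend $\s$ at the level of the finitely generated compositum $\operatorname{Frac}(A\otimes_F B\otimes_F F\l d\r)$ and then pass to the algebraic closure. Instead, one pulls back the full structures $\bigl(\operatorname{acl}(Aa),\s_K\bigr)$ and $\bigl(\operatorname{acl}(Bb),\s_K\bigr)$ to abstract copies $\operatorname{acl}(Ad)$ and $\operatorname{acl}(Bd)$ via the isomorphisms $d\mapsto a$ and $d\mapsto b$ (using $\phi$ to make the two induced $\s$-structures on $\operatorname{acl}(Fd)$ coincide), and one takes $\bigl(\operatorname{acl}(AB)_K,\s_K\bigr)$ itself as a third piece. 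One then checks, using that $F$, $A$, $B$ are algebraically closed and that $\operatorname{acl}(Fd)$ is algebraically disjoint from $\operatorname{acl}(AB)_K$ over $F$, that the pairwise intersections of these three difference fields are exactly $A\cdot\operatorname{acl}(Fd)$, $B\cdot\operatorname{acl}(Fd)$, and $\operatorname{acl}(AB)_K$ respectively (via regularity of the relevant extensions), so linear disjointness yields a unique $\s$ on the compositum extending all three; only at that point may $\s$ be extended arbitrarily to the algebraic closure. Because $\operatorname{acl}(AB)_K$ with $\s_K$ is now literally a $(\D,\s)$-subfield of the resulting model, Proposition~\ref{propert}(ii) applies over it and the embedding into $K$ fixing $A\cup B$ goes through. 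Your sketch skips precisely the bookkeeping that keeps $\s$ on $\operatorname{acl}(Ad)$, $\operatorname{acl}(Bd)$, and $\operatorname{acl}(AB)_K$ simultaneously under control, and the Galois-conjugacy shortcut you propose in its place is not available.
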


Thus, complete types over algebraically closed differential-difference fields are amalgamation bases, and hence each such type has a canonical base (see \cite{HKP} or \cite{Ca}). In other words, if $p$ is a complete type over an algebraically closed differential-difference field $F$, then there exists a set $Cb(p)$ such that $B:=\operatorname{dcl}(Cb(p))$ is the smallest definably closed subset of $F$ such that $p$ does not fork over $B$, and the restriction $p|_B$ is again an amalgamation bases. (The fact that we can take $Cb(p)$ in the real sort follows from Proposition~\ref{elimim} below, and the fact that supersimple theories eliminate hyperimaginaries \cite[Theorem 20.4]{Ca}.)

The proof of Proposition~3.36 of \cite{Bu} extends to the partial case to yield (one can also adapt the proof of Theorem 5.13 of \cite{MS}):

\begin{proposition}\label{elimim}
Every completion of $DCF_{0,m}A$ eliminates imaginaries.
\end{proposition}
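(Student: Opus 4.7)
The plan is to adapt Bustamante's proof of Proposition~3.36 in \cite{Bu} (itself modelled on the Chatzidakis-Hrushovski argument for $ACFA$) to the partial case, paralleling the adaptation already carried out in \cite{MS} for free operators. As usual, elimination of imaginaries is reduced to two subproblems: (a) coding of finite sets, and (b) weak elimination of imaginaries. Subproblem (a) is immediate because $DCF_{0,m}A$ expands $ACF_0$ on the field sort, and finite subsets of $K^n$ are coded via elementary symmetric polynomials.

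For (b), given an imaginary $e=[a]_E$ with $a$ a real tuple and $E$ a $\emptyset$-definable equivalence relation, I would pass to a saturated model and set $F:=\operatorname{acl}^{eq}(e)\cap K$; by Proposition~\ref{propert}(i) this is an algebraically closed differential-difference subfield. Let $c:=\operatorname{Cb}(\operatorname{tp}(a/F))$, which exists by the discussion following Proposition~\ref{indal}. By construction $c\in\operatorname{dcl}^{eq}(e)$, and $e\in\operatorname{acl}^{eq}(c)$ because the $E$-class of a non-forking realization of $\operatorname{tp}(a/F)$ is determined by only finitely much data derived from $c$. Thus it suffices to exhibit a real tuple interdefinable (up to finite data) with $c$.

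For this, one uses the description of forking in Proposition~\ref{propert}(v) — algebraic disjointness of differential-difference field hulls — to see that the canonical base of $\operatorname{tp}(a/F)$ is controlled by the $DCF_{0,m}$-loci, over $F$, of the finite tuples $(\s^{-n}a,\ldots,\s^n a)$ as $n$ ranges through $\mathbb N$. Each such $\D$-locus has a real canonical parameter in the reduct, by McGrail's elimination of imaginaries for $DCF_{0,m}$. The almost-quantifier-elimination of Proposition~\ref{propert}(iv) ensures that a single such tuple suffices (after moving to a sufficiently long initial segment), and the real canonical parameter of this $\D$-locus, together with the coding of finite sets already established, assembles into the desired real code.

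The main obstacle is to verify that this assembled real tuple truly lies in $\operatorname{dcl}^{eq}(c)$ rather than merely in $\operatorname{acl}^{eq}(c)$, and that it recovers $c$ up to $\operatorname{acl}^{eq}$; this is the technical core of the arguments in \cite{Bu} and \cite{MS}. The adaptation to the partial case goes through because the characterization of existentially closed models in terms of characteristic sets of prime differential ideals (Theorem~\ref{main}), together with the definability result of Fact~\ref{chr}, provides the same handle on $\s$-invariant $\D$-definable objects as in the ordinary case; the standard accounting of conjugates by automorphisms fixing $e$ setwise then yields the required real tuple in $\operatorname{dcl}^{eq}(e)$.
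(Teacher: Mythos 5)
The paper gives no self-contained argument for this proposition; it simply asserts that the proof of Proposition~3.36 of \cite{Bu} (equivalently Theorem~5.13 of \cite{MS}) carries over to the partial case. Your plan of adapting those arguments is therefore the intended route, and your cast of ingredients is correct: coding of finite sets via $ACF_0$, the independence theorem over algebraically closed differential-difference fields (Proposition~\ref{indal}), elimination of imaginaries for the reduct $DCF_{0,m}$, and control of types by the $\D$-loci of the tuples $(\s^{-n}a,\dots,\s^n a)$.

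However, there is a genuine gap at the step you label ``by construction'': it is not true in general that $c:=Cb(\operatorname{tp}(a/F))$ lies in $\operatorname{dcl}^{eq}(e)$. An automorphism fixing $e$ fixes $F=\operatorname{acl}^{eq}(e)\cap K$ only setwise, and may move $a$ to a different representative $a'$ of its $E$-class; it then sends $c$ to $Cb(\operatorname{tp}(a'/F))$, which is the canonical base of a possibly different type over $F$. All one gets for free is $c\subseteq\operatorname{acl}^{eq}(e)$. There is also a circularity to watch: the paper's discussion after Proposition~\ref{indal} explicitly notes that taking $Cb(p)$ in the real sort already relies on Proposition~\ref{elimim}, so a proof of the latter must treat canonical bases as hyperimaginaries (or classes of imaginaries) and separately show they are captured by real data. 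The actual engine of the cited proofs is a different step, which your sketch does not isolate: one first establishes $a\ind_F e$, by showing that $Cb(\operatorname{tp}(a/\operatorname{acl}^{eq}(e)))$ is interalgebraic with real $DCF_{0,m}$-codes for the $\D$-loci of $(\s^{-n}a,\dots,\s^n a)$ --- and here what bounds matters to a finite stage of the $\s$-tower is supersimplicity, not the near quantifier elimination of Proposition~\ref{propert}(iv) as you suggest --- so that this canonical base sits inside $\operatorname{acl}^{eq}(F)$. One then applies the independence theorem of Proposition~\ref{indal} to two $F$-independent realisations of $\operatorname{tp}(a/F)$ to force their $E$-classes to coincide, yielding $e\in\operatorname{dcl}^{eq}(F)$; coding of finite sets finishes the job. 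In short, the outline is sound and matches the paper's intended approach, but the proposal misplaces where the real work lies.
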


Let $(K, \D, \s)$ be a differential-difference field. We denote by $K^\s$ the \emph{fixed field} of $K$, that is $K^\s =\{a\in K:\, \s a=a\}$, and by $K^\D$ we denote the \emph{field of (total) constants} of $K$, that is $K^\D=\{a\in K: \, \d a=0 \text{ for all } \d\in \D\}$. We let $\C_K$ be the field $K^\s\cap K^\D$.

More generally, if $\D'$ is a set of linearly independent elements of the $\C_K$-vector space $\operatorname{span}_{\C_K}\D$, we let $K^{\D'}$ be the field of \emph{$\D'$-constants} of $K$, that is $K^{\D'} =\{a\in K:\, \d a=0\text{ for all } \d\in \D'\}$. In particular, $K^\emptyset = K$ and if $\D'$ is a basis of $\operatorname{span}_{\C_K}\D$ then $K^{\D'}=K^\D$. Note that both $K^\s$ and $K^{\D'}$ are differential-difference subfields of $(K,\D,\s)$. Also, $(K,\D',\s)$ is itself a differential-difference field.

In the following proposition $DCF_{0,0}A$ stands for $ACFA_0$.

\begin{proposition}\label{fields}
Let $(K, \D, \s)\models DCF_{0,m}A$, $\D_1$ and $\D_2$ disjoint sets such that $\D':=\D_1\cup\D_2$ forms a basis of $\operatorname{span}_{\C_K}\D$, and $r = |\D_1|$. Then 
\begin{enumerate}
\item [(i)] $(K,\D',\s)\models DCF_{0,m}A$
\item [(ii)] $(K,\D_1,\s)\models DCF_{0,r}A$
\item [(iii)] $(K^{\D_2},\D_1,\s)\models DCF_{0,r}A$
\item [(iv)] $K^{\D_2} \cap K^\s$ is a pseudofinite field. 
\item [(v)] For all $k\geq 1$, $(K,\D,\s^k)\models DCF_{0,m}A$ 
\item [(vi)] $((K^\s)^{alg}, \D)\models DCF_{0,m}$
\end{enumerate}
\end{proposition}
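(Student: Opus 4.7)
The plan is to verify, case by case, the geometric characterization given by Fact~\ref{GR} (equivalently Theorem~\ref{main}), using classical differential-algebraic facts and the theorems already proved in the paper. Part~(i) is essentially formal: since $\D'$ is a $\C_K$-basis of $\operatorname{span}_{\C_K}\D$ and $K\supseteq\C_K$, each derivation in $\D$ is a $\C_K$-linear combination of those in $\D'$ and vice versa, whence a subset of $K\{x\}$ is a $\D$-ideal if and only if it is a $\D'$-ideal. Consequently characteristic sets, prime differential ideals, and $\D$-closed sets coincide for the two sets of derivations, and condition~(2) of Theorem~\ref{main} transfers verbatim.

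For~(ii), the cleanest route is to observe that any $(\D_1,\s)$-extension of $K$ can be further extended to a $(\D,\s)$-extension of $K$ (extend $\D_2$ commuting with $\D_1$ and with $\s$), so any existential $\mathcal L_{r,\s}$-formula satisfiable in a $(\D_1,\s)$-extension is also satisfiable in a $(\D,\s)$-extension, and therefore (since $(K,\D,\s)$ is existentially closed) in $K$ itself; this proves $(K,\D_1,\s)\models DCF_{0,r}A$. Part~(iii) uses the classical Kolchin fact that $(K^{\D_2},\D_1)\models DCF_{0,r}$ and then argues similarly: any $(\D_1,\s)$-extension of $K^{\D_2}$ can be embedded, via its compositum with $K$ followed by a further extension of $\D_2$, into a $(\D,\s)$-extension of $K$, so existential satisfaction passes to $K^{\D_2}$.

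For~(iv), by~(iii) it suffices to show that the fixed field of any model of $DCF_{0,r}A$ is pseudofinite. Specialising Fact~\ref{GR}(ii) to Zariski closed/open data (after replacing the input varieties by the $\D$-irreducible components containing the relevant algebraic generics) shows that $(K,\s)$, viewed as a pure difference field, models $ACFA_0$; pseudofiniteness of $K^\s$ is then the theorem of Chatzidakis--Hrushovski. For~(v), verify condition~(2) of Theorem~\ref{main} for $\s^k$ by bootstrapping the axiom for $\s$: given $\L,\Ga,O,Q$ (with respect to $\s^k$), take a $\D$-generic $a$ of $\V^*(\L)$ with $(a,\s^k a)\in\V^*(\Ga)$ in a saturated extension, let $\L'$ be a characteristic set of the $\D$-locus of $(a,\s a,\dots,\s^{k-1}a)$ over $K$, and $\Ga'$ a characteristic set of the $\D$-locus of $(a,\s a,\dots,\s^{k-1}a,\s a,\dots,\s^k a)$; applying condition~(2) for $\s$ to $(\L',\Ga')$ (with open sets chosen to project into $O,Q$ on the first and last coordinates) yields $(a_0,\dots,a_{k-1})$ with $\s a_i=a_{i+1}$ for $i<k-1$ and $(a_0,\s^k a_0)\in\V^*(\Ga)$, as needed.

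The most delicate part is~(vi): showing $(F,\D)\models DCF_{0,m}$ for $F:=(K^\s)^{alg}$. Given an irreducible $\D$-closed set $V$ defined over $F$, the goal is to produce an $F$-rational point. Since $V$ is $F$-definable it is $\s$-invariant, so a $\D$-generic $a$ of $V$ in $K$ over $F$ satisfies $\operatorname{tp}(\s a/F)=\operatorname{tp}(a/F)$ in $DCF_{0,m}A$. The plan is to apply the independence theorem (Proposition~\ref{indal}) over $F$, together with elimination of imaginaries (Proposition~\ref{elimim}), to amalgamate $\operatorname{tp}(a/F)$ with its $\s$-translate and produce a realisation $b$ that can be arranged to satisfy $\s b = b$, so that $b\in K^\s\subseteq F$. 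The main obstacle is engineering the amalgamation to force $\s$-fixedness on the nose rather than merely equality of types; I expect this to require the canonical base machinery combined with iteration of $\s$ (where~(v) is invoked), following the template of the corresponding argument in~\cite{Bu} (for $m=1$) or in~\cite{MS}.
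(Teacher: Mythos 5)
Your part~(i) matches the paper. The two substantial problems are in~(ii) and~(vi), and (iii) and (v) are somewhat loosely argued.

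For~(ii) you write ``any $(\D_1,\s)$-extension of $K$ can be further extended to a $(\D,\s)$-extension of $K$ (extend $\D_2$ commuting with $\D_1$ and with $\s$).'' This is precisely the missing step, and it is not a routine extension: on the $\D_1$-extension $(F,\D_1,\s)$ you must produce derivations extending the prescribed (nontrivial) $\D_2$ on $K$, commuting with the prescribed $\D_1$ on $F$ and with the prescribed $\s$ on $F$. That is a genuine integrability problem — exactly the kind of obstacle one meets with several commuting derivations — and you do not justify it. (In the paper's proof of~(iii) a similar-looking move is made, but there it works only because $\D_2$ is \emph{trivial} on $K^{\D_2}$, so one can extend each element of $\D_2$ as the zero derivation on $F$; that trick is unavailable when extending from $K$, where $\D_2$ is nontrivial.) The paper instead appeals to the relative version of Kolchin's Irreducibility Theorem (\cite{Ko2}, Ch.~0, \S6): a $\D_1$-irreducible $\D_1$-closed set remains irreducible in the $\D$-topology, so each instance of the geometric axiom (Fact~\ref{GR}) that must be checked for $\D_1$ is already an instance of the axiom for $\D$ in $(K,\D,\s)$. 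Interestingly, your own argument for~(iv) (``replacing the input varieties by the $\D$-irreducible components containing the relevant algebraic generics'') is in effect the irreducibility-theorem argument, so you are implicitly using the paper's method there while proposing a different — and incomplete — one for~(ii).

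For~(vi) you correctly identify that this is the delicate part, and you honestly flag that you do not have a complete argument. But the amalgamation/canonical-base route you sketch is both incomplete and considerably more complicated than what is needed. The paper's argument is short: an irreducible $\D$-closed $V$ over $(K^\s)^{alg}$ is defined over a finitely generated subfield, hence over $K^{\s^k}$ for some $k\geq 1$ (since $K^{\s^k}$ is the unique degree-$k$ extension of $K^\s$, by pseudofiniteness), so $V^{\s^k}=V$. Taking $W$ to be the diagonal in $V\times V = V\times V^{\s^k}$, which clearly projects $\D$-dominantly onto both factors, part~(v) gives $a\in V$ with $(a,\s^k a)\in W$, i.e.\ $\s^k a = a$, so $a\in K^{\s^k}\subseteq (K^\s)^{alg}$. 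Since you already have~(v), this is the step worth filling in rather than the independence-theorem approach.

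Minor remarks: in~(iii) the ``compositum with $K$ followed by a further extension of $\D_2$'' is vaguer than the paper's device of simply extending each $\rho\in\D_2$ as the zero derivation on the $(\D_1,\s)$-extension $F$ of $K^{\D_2}$ and then invoking Proposition~\ref{propert}(ii). In~(v), your bootstrapping is the right idea, but the existence of the starting point ``$a$ with $(a,\s^k a)\in\V^*(\Ga)$ in a saturated extension'' is itself not immediate and needs justification; the paper avoids this by directly constructing $\tilde V=V\times V^\s\times\cdots\times V^{\s^{k-1}}$ and the explicit irreducible $\tilde W$ inside $\tilde V\times\tilde V^\s$, and applying Fact~\ref{GR} once.
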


\begin{proof} 
\

\noindent (i) It is easy to see that a set $V\subseteq K^n$ is $\D$-closed if and only if it is $\D'$-closed. Hence, irreducibility in the $\D$-topology is equivalent to irreducibility in the $\D'$-topology. Similarly a projection (onto any set of coordinates) is $\D$-dominant if and only if it is $\D'$-dominant. Therefore, each instance of the axioms of $DCF_{0,m}A$ (or rather of the characterization given by Fact~\ref{GR}) that needs to be checked for $\D'$ is true, as it is true for $\D$. 

\noindent (ii) By (1) we may assume that $\D_1\subseteq \D$. By the relative version of Kolchin's Irreducibility Theorem (\cite{Ko2}, Chapter 0, \S 6), every $\D_1$-closed set irreducible in the $\D_1$-topology is also irreducible in the $\D$-topology. Hence, every instance of the characterization of $DCF_{0,r}A$ given by Fact \ref{GR} that needs to checked for $\D_1$ is an instance of the characterization of $DCF_{0,m}A$ for $\D$. But all these instances are true since $(K,\D,\s)\models DCF_{0,m}A$.

\noindent (iii) By (1) we may assume that $\D_1\cup\D_2= \D$. We show that $(K^{\D_2},\D_1,\s)$ is existentially closed. Let $\phi(x)$ be a quantifier free $\mathcal{L}_{r,\s}$-formula over $K^{\D_2}$ with a realisation $a$ in some differential-difference field $(F, \Omega_1, \gamma)$ extending $(K^{\D_2} , \D_1, \s)$. Let $\Omega := \Omega_1\cup \{\rho_1,\dots, \rho_{m-r}\}$ where each $\rho_i$ is the trivial derivation on $F$. Hence, $(F, \Omega, \gamma)$ is a differential-difference field extending $(K^{\D_2} , \D, \s)$. Let $(L, \Omega, \gamma)$ be a model of $DCF_{0,m}A$ extending $(F, \Omega, \gamma)$. Since $K^{\D_2}$ is a common algebraically closed differential-difference subfield of $K$ and $L$, and $L \models \phi(a)$ and $\rho_i a = 0$ for $i=1,\dots,m-r$, by (ii) of Proposition~\ref{propert} $K$ has a realisation $b$ of $\phi$ such that $b\in K^{\D_2}$. Thus, since $\phi$ is quantifier free, $K^{\D_2}\models \phi(b)$.

\noindent (iv) By (3), we have that $(K^{\D_2}, \D_1, \s)\models DCF_{0,r}A$, and so, by (2), $(K^{\D_2},\s)\models ACFA_0$. Hence, the fixed field of $(K^{\D_2},\s)$, which is $K^{\D_2}\cap K^{\s}$, is pseudofinite.

\noindent (v) This can be shown as in Corollary 3.38 of \cite{Bu}. We give a slightly more direct argument using the characterization given by Fact~\ref{GR}. Let $V\subseteq K^n$ and $W\subseteq K^{2n}$ be irreducible $\D$-closed sets such that $W\subseteq V\times V^{\s^k}$ and $W$ projects $\D$-dominantly onto both $V$ and $V^{\s^k}$. We aim to show that there is $a\in V$ such that $(a,\s^k a)\in W$. Let 
$$\tilde{V}=V\times V^\s\times\cdots\times V^{\s^{k-1}}$$ 
and 
\begin{align*}
\tilde{W}=\{(x_0,\dots,x_{k-1},y_0,\dots,y_{k-1})\in K^{2kn}
&\colon (x_0,y_{k-1})\in W,\, (x_0,\dots,x_{k-1})\in \tilde{V}, \\
& \;\;\; \text{and } x_{i+1}=y_i  \text{ for }\,  i=0,\dots,k-2\}. 
\end{align*}
Then $\tilde{W}$ is an irreducible $\D$-closed subset of $\tilde{V}\times \tilde{V}^\s$ that projects $\D$-dominantly onto both $\tilde{V}$ and $\tilde{V}^\s$. Then, since $(K,\D,\s)\models DCF_{0,m}A$, we get a point in $\tilde{W}$ of the form
$$(a_0,\dots,a_{k-1},\s a_0,\dots,\s a_{k-1}).$$ By the equations of $\tilde{W}$, we have that $(a_0,\s a_{k-1})\in W$ and $a_{k-1}=\s^{k-1}a_0$. Hence, $(a_0,\s^k a_0)\in W$, as desired.

\noindent (vi) This appears in Theorem 3.2 of \cite{GR}. Let us give a brief sketch of the proof. Let $V$ be an irreducible $\D$-closed set defined over $(K^\s)^{alg}$. We need to show that $V$ has a $(K^\s)^{alg}$-point. Since the unique extension of $K^\s$ of degree $n$ is $K^{\s^n}$, we can find $k\geq 1$ such that $V^{\s^k}= V$. Let $W$ be the diagonal in $V \times V = V \times V^{\s^k}$, then $W$ projects $\D$-dominantly onto $V$ and $V^{\s^k}$. Hence, by (v), there is a point $a$ in $V$ such that $(a,\s^k a)$ is in $W$, and so $\s^k a=a$. Hence, $a$ is in $K^{\s^k} \subset (K^\s)^{alg}$.
\end{proof}

\

\section{The canonical base property and the Zilber dichotomy}\label{Zil}

In this section we prove the canonical base property, and consequently the Zilber dichotomy, for finite dimensional types in $DCF_{0,m}A$. Our proof follows the arguments given by Pillay and Ziegler in \cite{PZ}, where they prove the dichotomy for $DCF_0$ and $ACFA_0$ using the theory of jet spaces for algebraic varieties (this is also the strategy of Bustamante in \cite{Bu3} to prove the dichotomy for finite dimensional types in $DCF_{0,1}A$). 

For the rest of this section fix a sufficiently saturated $(\U,\D,\s)\models DCF_{0,m}A$, and an algebraically closed differential-difference subfield $K$ of $\U$. We first recall the theory of jet spaces from algebraic geometry. We refer the reader to \S 5 of \cite{MS1} for a more detailed treatment of this classical material. 

Let $V$ be an irreducible affine algebraic variety defined over $K$. Let $\displaystyle \U[V]=\U[x]/\I(V/\U)$ denote the coordinate ring of $V$ over $\U$. For each $a\in V$, let $$\M_{V,a}=\{f\in \U[V]\colon f(a)=0\}.$$ Let $r>0$, the \emph{$r$-th jet space of $V$ at $a$}, denoted by $j_r V_a$, is defined as the dual space of the finite dimensional $\U$-vector space $\M_{V,a}/\M_{V,a}^{r+1}$.

The following gives explicit equations for the $r$-th jet space and allows us to consider it as an affine algebraic variety.

\begin{fact}
Let $V\subseteq \U^n$ be an irreducible affine algebraic variety defined over $K$. Fix $r>0$. Let $\DD$ be the set of operators of the form $$\frac{\partial^s}{\partial x_{i_1}^{s_1}\cdots\partial x_{i_k}^{s_k}}$$ where $0<s\leq r$, $1\leq i_1<\cdots<i_k\leq n$, $s_1+\cdots+s_k=s$, and $0<s_i$. Let $a\in V$ and $d=|\DD|$. Then $j_r V_a$ can be identified with the $\U$-vector subspace $$\{(u_D)_{D\in \DD}\in \U^d \colon \sum_{D\in \DD}Df(a)u_D=0, \text{ for all } f\in \I(V/K)\}.$$ 
\end{fact}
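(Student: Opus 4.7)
The plan is to identify the dual of $\M_{V,a}/\M_{V,a}^{r+1}$ with the claimed subspace of $\U^d$ by expressing $\U$-linear functionals on $\M_a$ in terms of the operators in $\DD$, and then descending the resulting equations from $\I(V/\U)$ to $\I(V/K)$.

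My first step would be to note that since $a\in V$ we have $\I(V/\U)\subseteq \M_a$, so $\M_{V,a}=\M_a/\I(V/\U)$ and hence
\[ \M_{V,a}/\M_{V,a}^{r+1}=\M_a/(\M_a^{r+1}+\I(V/\U)). \]
Dualizing, $j_rV_a$ becomes the space of $\U$-linear functionals on $\M_a$ annihilating both $\M_a^{r+1}$ and $\I(V/\U)$.

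Next I would define, for each $D\in\DD$, the $\U$-linear functional $e_D\colon\M_a\to\U$ by $e_D(f):=Df(a)$. Since $D$ has order at most $r$, $e_D$ vanishes on $\M_a^{r+1}$. The elements of $\DD$ are in bijection with multi-indices $\alpha\in\NN^n$ satisfying $1\leq|\alpha|\leq r$ via $D=\partial^\alpha$, and these multi-indices parametrize the standard $\U$-basis $\{(x-a)^\alpha : 1\leq|\alpha|\leq r\}$ of $\M_a/\M_a^{r+1}$. The Taylor identity $\partial^\beta((x-a)^\alpha)(a)=\alpha!\,\delta_{\alpha,\beta}$ would then show that $\{e_D\}_{D\in\DD}$ is, up to the nonzero scalars $\alpha!$, the basis dual to $\{(x-a)^\alpha\}$. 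In particular $|\DD|=d=\dim_\U\M_a/\M_a^{r+1}$, and every $\U$-linear functional on $\M_a$ vanishing on $\M_a^{r+1}$ has a unique expression $\sum_{D\in\DD}u_D\,e_D$ with $(u_D)\in\U^d$.

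Finally, such a functional additionally annihilates $\I(V/\U)$ exactly when $\sum_{D\in\DD}u_D\,Df(a)=0$ for every $f\in\I(V/\U)$. Because $V$ is defined over $K$, $\I(V/K)\subseteq K[x]$ is a $K[x]$-submodule and $\I(V/\U)=\I(V/K)\cdot\U[x]=\I(V/K)\otimes_K\U$, so $\I(V/\U)$ is the $\U$-linear span of $\I(V/K)$ inside $\U[x]$. Since the condition above is $\U$-linear in $f$, it is equivalent to requiring it for all $f\in\I(V/K)$, yielding the claimed subspace of $\U^d$. The one genuinely delicate point is this last reduction from $\I(V/\U)$ to $\I(V/K)$, which is precisely where the hypothesis that $V$ is defined over $K$ is used.
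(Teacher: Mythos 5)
The paper states this as a cited Fact (referring to Moosa--Scanlon's treatment of jet spaces) and gives no proof of its own, so there is nothing in the text to compare against. Your argument is a correct and standard verification: you reduce to dualizing $\M_a/(\M_a^{r+1}+\I(V/\U))$, use the Taylor-dual basis $\{e_D\}_{D\in\DD}$ to coordinatize $(\M_a/\M_a^{r+1})^*$ as $\U^d$, and then pass the annihilation condition from $\I(V/\U)$ to $\I(V/K)$ via $\U$-linearity and the fact that $\I(V/\U)=\I(V/K)\cdot\U[x]$ is the $\U$-span of $\I(V/K)$ (here using that $V$ is defined over $K$ and char $0$, so the factorials $\alpha!$ are units and the $e_D$ really do span the dual). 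The only cosmetic point is that your coordinates $(u_D)$ differ from the coordinates $\lambda\mapsto\bigl(\lambda(\overline{(x-a)^{\alpha}})\bigr)$ by the nonzero scalars $\alpha!$, but since the Fact only asserts ``can be identified with,'' this is harmless.
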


Let $X\subseteq V$ be an irreducible algebraic subvariety and $a\in X$. The containment of $X$ in $V$ yields a canonical linear embedding of $j_r X_a$ into $j_r V_a$ for all $r$. We therefore identify $j_r X_a$ with its image in $j_r V_a$.

\begin{fact}\label{thethe}
Let $X$ and $Y$ be irreducible algebraic subvarieties of $V$ and $a\in X\cap Y$. Then, $X=Y$ if and only if $j_r X_a=j_r Y_a$, as subspaces of $j_m V_a$, for all $r$.
\end{fact}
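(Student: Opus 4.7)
The forward direction is immediate: if $X = Y$ then $\M_{X,a} = \M_{Y,a}$, so the duals of $\M_{X,a}/\M_{X,a}^{r+1}$ and $\M_{Y,a}/\M_{Y,a}^{r+1}$ coincide as subspaces of $j_r V_a$ for every $r$. The substance is in the reverse direction, so I will focus there.

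My plan is to translate the equality of jet spaces into an equality of ideals via duality, and then invoke Krull's intersection theorem. Let $P_X, P_Y \subseteq \U[V]$ denote the prime ideals defining $X$ and $Y$ respectively; since $a \in X \cap Y$, both are contained in the maximal ideal $\M = \M_{V,a}$ of $\U[V]$ corresponding to $a$. Under the canonical perfect pairing between $\M/\M^{r+1}$ and its dual $j_r V_a$, the subspace $j_r X_a \subseteq j_r V_a$ is precisely the annihilator of the image of $P_X$ in $\M/\M^{r+1}$; that is, $j_r X_a = (P_X + \M^{r+1})^{\perp}$, and similarly for $Y$. Consequently, the hypothesis $j_r X_a = j_r Y_a$ for all $r$ is equivalent to the equality
\[
P_X + \M^{r+1} = P_Y + \M^{r+1}
\]
for every $r \geq 1$ inside $\U[V]$.

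Now I would pass to the local ring $\mathcal O_{V,a} = \U[V]_\M$ and its $\M$-adic completion $\widehat{\mathcal O_{V,a}}$. The displayed equalities say exactly that the extended ideals $P_X \widehat{\mathcal O_{V,a}}$ and $P_Y \widehat{\mathcal O_{V,a}}$ coincide. By faithful flatness of completion over the Noetherian local ring $\mathcal O_{V,a}$, we recover $P_X \mathcal O_{V,a} = P_Y \mathcal O_{V,a}$; equivalently, by Krull's intersection theorem, $\bigcap_r (P_X + \M^{r+1}) = P_X \mathcal O_{V,a} \cap \U[V] = P_X$ (using that $P_X$ is a prime contained in $\M$), and analogously for $P_Y$, so $P_X = P_Y$ and thus $X = Y$.

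The only real subtlety is the duality identification in the middle paragraph, which requires unwinding the definition of $j_r X_a$ as $(\M_{X,a}/\M_{X,a}^{r+1})^{\vee}$ and recognizing the induced embedding into $j_r V_a$ as dual to the surjection $\M/\M^{r+1} \twoheadrightarrow \M_{X,a}/\M_{X,a}^{r+1}$ whose kernel is $(P_X + \M^{r+1})/\M^{r+1}$; everything after that is purely commutative-algebraic and should be routine.
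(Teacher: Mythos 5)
Your proof is correct. The paper does not actually prove this Fact; it is cited as classical, with a pointer to \S 5 of Moosa--Scanlon, \emph{Jet and prolongation spaces}, where the argument is essentially the one you give: identify $j_r X_a$ inside $j_r V_a$ as the annihilator of $(P_X + \M^{r+1})/\M^{r+1}$ under the perfect pairing with $\M/\M^{r+1}$, so that equality of jet spaces for all $r$ becomes $P_X + \M^{r+1} = P_Y + \M^{r+1}$ for all $r$, and then separate the two primes by Krull intersection.

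One small stylistic point: your middle paragraph superimposes two independent ways to finish, and the connective ``equivalently, by Krull's intersection theorem'' presents the second as a restatement of the first when it is really an alternative. The completion/faithful-flatness route works as you describe. But the detour through $\widehat{\mathcal O_{V,a}}$ is unnecessary: $\U[V]/P_X = \U[X]$ is a Noetherian domain, so Krull's intersection theorem gives $\bigcap_n \M_{X,a}^{\,n} = 0$ there, i.e.\ $\bigcap_r\bigl(P_X + \M^{r+1}\bigr) = P_X$ directly in $\U[V]$, and likewise for $Y$; the hypothesis then forces $P_X = P_Y$ without localizing or completing. Either route is fine, but stating one cleanly would be clearer than gesturing at both.
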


We now introduce the $(\D,\s)$ analogue of the notions of differential and difference modules from \cite{PZ}. 

\begin{definition}
By a \emph{$(\D,\s)$-module over $(\U,\D,\s)$} we mean a triple $(E,\Omega,\Sigma)$ such that $E$ is a $\U$-vector space, $\Omega=\{\dd_1,\dots,\dd_m\}$ is a family of additive endomorphisms of $E$ and $\Sigma$ is an additive automorphism of $E$ such that $$\dd_i(\al e)=\d_i (\al) e+\al \dd_i(e)$$ and $$\Sigma(\al e)=\s(\al)\Sigma(e)$$ for all $\al\in \U$ and $e\in E$, and the operators in $\Omega\cup\{\Sigma\}$ commute.  If we omit $\s$ and $\Sigma$ we obtain Pillay and Ziegler's definition of a \emph{$\D$-module over $(\U,\D)$}. Similarly, if we omit $\D$ and $\Omega$ we obtain the definition of a \emph{$\s$-module over $(\U,\s)$}. 
\end{definition}

The following is for us the key property of $(\D,\s)$-modules.

\begin{lemma}
Let $(E,\Omega,\Sigma)$ be a finite dimensional $(\D,\s)$-module over $\U$. Let $$E^\#=\{e\in E \colon \Sigma(e)=e \text{ and } \dd(e)=0 \text{ for all } \dd\in \Omega\}.$$ Then $E^\#$ is a $\C_\U$-vector space (recall that $\C_\U=\U^\s\cap\U^\D$) and there is a $\C_\U$-basis of $E^\#$ which is also a $\U$-basis of $E$.
\end{lemma}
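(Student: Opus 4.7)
The plan is to reduce the combined statement to the two separate ``constants'' results already available in the literature: the one for $\D$-modules over a partial differentially closed field, and the one for $\s$-modules over a model of $ACFA_0$. The two are then chained by passing through the intermediate space
$$E^\nabla := \{e \in E \colon \dd(e)=0 \text{ for all } \dd \in \Omega\}.$$

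First I would verify that $\Sigma$ restricts to an additive automorphism of $E^\nabla$. This is immediate from the commutation of $\Sigma$ with each $\dd \in \Omega$: if $e \in E^\nabla$ then $\dd(\Sigma e) = \Sigma(\dd e)=0$, so $\Sigma$ preserves $E^\nabla$; and since $\Sigma$ is an automorphism of $E$, the identity $\Sigma(\dd e)=\dd(\Sigma e)$ together with injectivity of $\Sigma$ gives surjectivity onto $E^\nabla$. Moreover $\Sigma$ is $\s$-semilinear on $E^\nabla$ with respect to the $\U^\D$-action, because $\s$ preserves $\U^\D$ (here I use that $\s$ commutes with every $\d_i$).

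Second, I would invoke the $\D$-module version of Pillay--Ziegler's result, adapted to several commuting derivations (this is a standard consequence of Picard--Vessiot theory over $(\U,\D)\models DCF_{0,m}$): $E^\nabla$ is a $\U^\D$-vector space of the same dimension $n=\dim_\U E$, and every $\U^\D$-basis of $E^\nabla$ is a $\U$-basis of $E$. So I obtain an $n$-dimensional $\s$-module $(E^\nabla,\Sigma|_{E^\nabla})$ over $(\U^\D,\s)$.

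Third, applying Proposition~\ref{fields}(iii) to $\U$ with $\D_1=\emptyset$ and $\D_2$ a basis of $\operatorname{span}_{\C_\U}\D$ (so that $\U^{\D_2}=\U^\D$) yields $(\U^\D,\s)\models DCF_{0,0}A = ACFA_0$. Then the $\s$-module version of Pillay--Ziegler's result, applied to $(E^\nabla,\Sigma|_{E^\nabla})$ over $(\U^\D,\s)$, gives a $(\U^\D)^\s$-basis of $(E^\nabla)^\Sigma$ which is a $\U^\D$-basis of $E^\nabla$. Since $(\U^\D)^\s = \C_\U$ and $(E^\nabla)^\Sigma = E^\#$, composing with the previous step produces a $\C_\U$-basis of $E^\#$ that is simultaneously a $\U$-basis of $E$. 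That $E^\#$ is a $\C_\U$-vector space then follows at once from the definitions.

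The only real work is the chaining itself; both halves of the argument are classical, and the key point that makes the reduction legitimate is the bookkeeping observation that $\s$ and $\Sigma$ descend to the differential fixed objects $\U^\D$ and $E^\nabla$. The main conceptual obstacle, and the reason this lemma is not entirely cosmetic, is that one needs a priori that $(\U^\D,\s)$ already satisfies enough to carry out the $\s$-module argument; this is precisely what Proposition~\ref{fields}(iii) supplies.
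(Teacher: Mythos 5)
Your argument is correct, and it takes a genuinely different route from the paper's. The paper's proof is direct and self-contained: fix a $\U$-basis of $E$, represent the operators $\dd_i$ and $\Sigma$ by matrices $A_i$ and $B$, derive the integrability identities (\ref{uset})--(\ref{usec}) from the commutation relations, extend $\D$ and $\s$ to the rational function field $\U(X)$ in $d^2$ indeterminates by $\d_i X=-A_iX$ and $\s X=B^{-1}X$, and invoke existential closedness of $(\U,\D,\s)$ once to find a nonsingular solution $M$ over $\U$ whose columns give the desired basis. You instead factor through $E^\nabla$ and chain the two single-operator ``fundamental system of solutions'' theorems --- for $\D$-modules over a model of $DCF_{0,m}$ and for $\s$-modules over a model of $ACFA_0$ --- with Proposition~\ref{fields}(iii), applied with $\D_1=\emptyset$, supplying the crucial input that $(\U^\D,\s)\models ACFA_0$ so that the second theorem is available. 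Both uses of existential closedness are still there, just packaged inside the cited results. Your route is more modular and conceptually transparent: it exhibits the lemma as the composite of its differential and difference shadows once two bookkeeping facts are checked ($\Sigma$ descends to $E^\nabla$, and $(\U^\D,\s)$ lands in the right class of structures). The paper's route does not presuppose either constants theorem and isolates in one matrix computation all the commutativity being used --- which is essentially what you would obtain by unwinding and merging your two reductions into a single application of existential closedness of $(\U,\D,\s)$.
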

\begin{proof}
Clearly $E^\#$ is a $\C_\U$-vector space. Let $\{e_1,\dots,e_d\}$ be a $\U$-basis of $E$. With respect to this basis, let $A_i$ be the matrix of $\dd_i$, $i=1,\dots, m$, and $B$ the matrix of $\Sigma$. By this we mean that $A_i$ is the matrix whose $j$-th column consists of the coefficients of the linear combination of $\dd_i(e_j)$ in terms of the basis, and similarly for $B$. Under the linear transformation that takes the basis $\{e_1,\dots,e_d\}$ to the standard basis of $\U^d$, the $(\D,\s)$-module $(E,\Omega,\Sigma)$ is transformed into the $(\D,\s)$-module $$(\U^d,\{\d_1+A_1,\dots,\d_m+A_m\},B\s).$$ It suffices to prove the result for this $(\D,\s)$-module. 
As $\Sigma$ is an additive automorphism of $E$, the matrix $B$ is invertible. Also, the commutativity of $\Omega\cup\{\Sigma\}$ yields:
\begin{equation}\label{uset}
\d_j A_i -\d_i A_j=[A_i,A_j], \quad i=1,\dots,m
\end{equation}
and 
\begin{equation}\label{useg}
B\s(A_i)=\d_i(B)+A_i B, \quad i=1,\dots,m.
\end{equation}
Since $\d_i(B^{-1})=-B^{-1}\d_i(B)B^{-1}$, the previous equation yields
\begin{equation}\label{usec}
B^{-1}A_i=\d_i(B^{-1})+\s(A_i)B^{-1}, \quad i=1,\dots,m.
\end{equation}
Now, note that $(\U^d)^\#=\{u\in \U^d: B \s u=u \text{ and } \d_i u+A_i u=0, i=1,\dots,m\}$, and thus it suffices to find a nonsingular $d\times d$ matrix $M$ over $\U$ such that $B\s M=M$ and $\d_i M+A_i M=0$. Indeed, the columns of this matrix $M$ will form a basis of $\U^d$ whose elements are all in $(\U^d)^\#$. Let $X$ be an $d\times d$ matrix of variables. Extend $\D$ to derivations on $\U(X)$ by letting $\d_i X=-A_i X$ for $i=1,\dots,m$, and $\s$ to an automorphism on $\U(X)$ by letting $\s X=B^{-1}X$. By \ref{uset} the derivations $\D$ on $\U(X)$ commute and by \ref{usec} the automorphism $\s$ on $\U(X)$ commutes with $\D$. Hence, $\U(X)$ is a differential-difference field extension of $\U$ such that $B\s X=X$ and $\d_i X+A_i X=0$. As $(\U,\D,\s)$ is existentially closed we can find a nonsingular matrix $M$ over $\U$ satisfying the desired properties.
\end{proof}

Notice that if $\{e_1,\dots,e_d\}\subset E^\#$ is a $\U$-basis of $E$, which exists by the previous lemma, then, under the linear transformation that takes this basis to the standard basis of $\U^d$, the $(\D,\s)$-module $(E,\Omega,\Sigma)$ is transformed into the $(\D,\s)$-module $(\U^d, \D,\s)$.

\begin{remark}\label{coli}
Let $(E,\Omega)$ be a $\D$-module over $(\U,\D)$ and $E^*$ be the dual space of $E$. If we define the \emph{dual operators} $\Omega^*=\{\dd_1^*,\dots,\dd_m^*\}$ on $E^*$ by $$\dd_i^*(\lambda)(e)=\d_i(\lambda(e))-\lambda(\dd_i(e))$$ for all $\lambda \in E^*$ and $e\in E$, then $(E^*,\Omega^*)$ becomes a $\D$-module over $\U$. Indeed, by Remark~3.3 of \cite{PZ}, $(E^*,\dd_i^*)$ is a $\{\d_i\}$-module. Hence, all we need to verify is that the $\dd_i^*$'s commute: 
\begin{align*}
\dd^*_j(\dd^*_i(\lambda))(e)
&= \d_j(\d_i(\lambda(e)))-\d_j(\lambda(\dd_i(e)))-\d_i(\lambda(\dd_j(e)))+\lambda(\dd_i(\dd_j(e))) \\
&= \d_i(\d_j(\lambda(e)))-\d_i(\lambda(\dd_j(e)))-\d_j(\lambda(\dd_i(e)))+\lambda(\dd_j(\dd_i(e))) \\
&= \dd^*_i(\dd^*_j(\lambda))(e).
\end{align*}
\end{remark}

We now describe a natural $(\D,\s)$-module structure on the jet spaces of an algebraic D-variety equipped with a finite-to-finite correspondence with its $\s$-transform. We first need to recall the notion of an algebraic D-variety (w.r.t. $\D$).

\begin{definition}\label{Dvar}
An (affine) \emph{algebraic D-variety} defined over $K$ is a pair $(V,s)$ where $V\subseteq \U^n$ is an irreducible affine algebraic variety over $K$ and $s=(s_1,\dots,s_m)$ is an $m$-tuple of polynomial maps over $K$ such that each $s_i=(s_i^{(1)},\dots,s_i^{(n)}):V\to \U^n$ satisfies
$$\sum_{k=1}^n\frac{\partial f}{\partial x_k}(x)s_i^{(k)}(x)+f^{\d_i}(x)\in \I(V/K)$$
for every $f\in \I(V/K)$, where $f^{\d_i}$ is the polynomial obtained by applying $\d_i$ to the coefficients of $f$. We also require the following \emph{integrability condition}
\begin{equation}\label{rel1}
\sum_{k=1}^n\frac{\partial s_i^{(\ell)}}{\partial x_k}(x)s_j^{(k)}(x)+s_i^{(\ell)}(x)\equiv  \sum_{k=1}^n\frac{\partial s_j^{(\ell)}}{\partial x_k}(x)s_i^{(k)}(x)+s_j^{(\ell)}(x)\quad \text{mod}\,\I(V/K)
\end{equation}
for all $1\leq i<j\leq m$ and $\ell=1,\dots,n$.
\end{definition}

In the ordinary case $\D=\{\d\}$ this notion coincides with the algebraic D-varieties studied by Pillay in \cite{PiT} and \cite{Pi7}. Moreover, the way we have presented them here is in the spirit of \S3 of \cite{Le3}, where the more general notion of \emph{relative D-variety} was introduced (our algebraic D-varieties are in fact relative D-varieties w.r.t. $\D/\emptyset$).

The set of sharp points of an affine D-variety $(V,s=(s_1,\dots,s_m))$ is defined as 
$$(V,s)^\#=\{a\in V\colon s_i(a)=\d_i a, \, i=1,\dots,m\}.$$
In \cite{Le3} it is shown that the integrability condition~\ref{rel1} is a necessary and sufficient condition for $(V,s)^\#$ to be Zariski dense in $V$. Also, it follows from the equations of the sharp points that if $a\in (V,s)^\#$ then the $\D$-field generated by $a$ over $K$ has finite transcendence degree over $K$ (in fact it is equal to $K(a)$). Conversely, we have

\begin{fact}[\cite{Le3},\S3]\label{interde}
Let $a$ be a tuple such that the $\D$-field generated by $a$ over $K$ has finite transcendence degree over $K$. Then, up to $\D$-interdefinability over $K$,  $a$ is a $\D$-generic point of $(V,s)^\#$ over $K$ for some algebraic D-variety $(V,s)$.
\end{fact}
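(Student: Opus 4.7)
The plan is to produce a finite tuple $b$ from $K\l a\r$, $\D$-interdefinable with $a$ over $K$, whose coordinates generate $K\l a\r$ as a pure field and for which each $\d_i b$ is polynomial in $b$ over $K$. Once such a $b$ is in hand, the algebraic D-variety $(V,s)$ of Definition~\ref{Dvar} can be read off directly, with $b$ as a $\D$-generic sharp point.

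The first task is to show that $K\l a\r$ is finitely generated over $K$ as a pure field. Let $F_r$ denote the subfield of $K\l a\r$ generated over $K$ by all derivatives of $a$ of order $\leq r$. Since $\operatorname{trdeg}(K\l a\r/K)<\infty$ by hypothesis, the nondecreasing sequence $\operatorname{trdeg}(F_r/K)$ stabilizes at some $r_0$. For $r\geq r_0$ each step $F_r\subseteq F_{r+1}$ is algebraic and finitely generated, hence finite (characteristic $0$); a standard differential-algebraic argument, via the characteristic set of $\I(a/K)_\D$ whose leaders bound the algebraic degrees of higher derivatives over $F_{r_0}$, shows that these degrees stabilize too, so that $K\l a\r=F_{r_1}$ for some $r_1\geq r_0$. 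Let $b^{(0)}$ be the tuple of all derivatives of $a$ of order $\leq r_1$; then $K\l a\r=K(b^{(0)})$, and for each $i,j$ there exist $p_{ij}\in K[x]$ and a common denominator $q\in K[x]$ with $\d_i b_j^{(0)}=p_{ij}(b^{(0)})/q(b^{(0)})$.

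To polynomialize, I would adjoin $c:=1/q(b^{(0)})\in K\l a\r$ to obtain $b=(b^{(0)},c)$. A chain-rule computation, using $\d_i c=-c^2\,\d_i(q(b^{(0)}))$ together with the formulas for $\d_i b^{(0)}_k$ in terms of $b$, shows that each coordinate of $\d_i b$ is of the form $s_i^{(k)}(b)$ for some $s_i^{(k)}\in K[x]$; set $s_i=(s_i^{(1)},\dots,s_i^{(N)})$ and let $V\subseteq \U^N$ be the $K$-algebraic locus of $b$, which is irreducible. The axioms of Definition~\ref{Dvar} then follow by differentiating: for any $f\in \I(V/K)$, applying $\d_i$ to the identity $f(b)=0$ and using the chain rule together with $\d_i b_k=s_i^{(k)}(b)$ yields precisely the polynomial congruence required; and the integrability relation~(\ref{rel1}) drops out of $\d_i\d_j b_\ell=\d_j\d_i b_\ell$ in the same manner. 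By construction $s_i(b)=\d_i b$, so $b\in (V,s)^\#$; and since $\dim V=\operatorname{trdeg}(K(b)/K)=\operatorname{trdeg}(K\l a\r/K)$, the tuple $b$ is a $\D$-generic point of $(V,s)^\#$ over $K$. The interdefinability $\l a\r=\l b\r$ over $K$ is immediate from the construction.

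The principal obstacle is the first step, namely the upgrade from differential finite generation plus finite transcendence degree to finite generation as a pure field; without this one cannot even form a tuple $b$ with $K(b)=K\l a\r$. Once that is secured, the inversion trick adjoining $c$ is a clean way to convert the rational expressions $\d_i b^{(0)}_j=p_{ij}(b^{(0)})/q(b^{(0)})$ into global polynomial data, which is essential because Definition~\ref{Dvar} demands a polynomial (not merely rational) map $s$ on $V$; restricting to the open affine where $q\neq 0$ would not suffice without abandoning the globally polynomial hypothesis of the definition.
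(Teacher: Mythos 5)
Your proof is correct and follows the same strategy as the cited source \cite{Le3}: use the characteristic set of $\I(a/K)_\D$ to upgrade finite transcendence degree to finite generation of $K\langle a\rangle$ as a pure field over $K$, clear denominators by adjoining $1/q$, and read the algebraic D-variety off the Zariski locus of the enlarged tuple. The one point I would sharpen is the justification that the tower $(F_r)$ becomes stationary: it is not that the leaders bound later degrees, but rather that beyond the maximum order of the leaders every new derivative of $a$ is a \emph{proper} derivative of some $v_{f_j}$ and hence, by the Rosenfeld-style relation $\theta f_j = S_{f_j}\,\theta v_{f_j} + (\text{lower rank})$, is \emph{rational} (not merely algebraic) over lower-rank coordinates once the separants are inverted, so the finitely many genuinely algebraic generators (the $v_{f_j}(a)$) and the transcendence basis lie in bounded order and from some order on the inclusions $F_r\subseteq F_{r+1}$ are equalities.
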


Here $\D$-interdefinability means interdefinable in the language of differential rings.

Let $(V,s=(s_1,\dots,s_m))$ be an algebraic D-variety defined over $K$. Then we can extend the derivations $\D$ to the coordinate ring $\U[V]$ by defining $\d_i(x)=s_i(x)$ where $x=(x_1,\dots,x_n)$ are the coordinate functions of $\U[V]$. The integrability condition (\ref{rel1}) shows that these extensions commute with each other. Hence, $(\U[V],\D)$ becomes a $\D$-ring (having a $\D$-ring structure on the coordinate ring is the approach of Buium to D-varieties \cite{Buium}).

Let $a\in (V,s)^\#$. Then, $\M_{V,a}^r$ is a $\d_i$-ideal of the $\d_i$-ring $\U[V]$ for all $r>0$. This is shown explicitly in Lemma 3.7 of \cite{PZ}. Hence, $(\M_{V,a}/\M_{V,a}^r,\D)$ becomes a $\D$-module over $(\U,\D)$. By Remark \ref{coli}, $(j_r V_a,\D^*)$ is a $\D$-module over $(\U,\D)$. 

Suppose now $a\in (V,s)^\#$ is a $\D$-generic point over $K$ and $\s a\in K(a)^{alg}$. Let $W$ be the Zariski locus of $(a,\s a)$ over $K$. Then $W\subseteq V\times V^\s$ projects dominantly and generically finite-to-one onto both $V$ and $V^\s$. Moreover, for each $r>0$, $j_r W_{(a,\s a)}\subseteq j_r V_a\times j_rV^\s_{\s a}$ is the graph of an isomorphism $f:j_r V_a\to j_r V^\s_{\s a}$ and the map $\s^*=f^{-1}\comp \s$ equips $j_r V_a$ with the structure of a $\s$-module over $(\U,\s)$ (see Lemma 4.3 of \cite{PZ} for details). Furthermore, Lemma 4.4 of \cite{Bu3} shows that $(j_r V_a,\d_i^*,\s^*)$ is a $(\d_i,\s)$-module over $(\U,\d_i,\s)$ for all $i=1,\dots,m$. Thus, since we have already seen that the dual operators $\D^*$ commute, $(j_r V_a,\D^*,\s^*)$ is a $(\D,\s)$-module over $(\U,\D,\s)$.

\begin{remark}
Let $V$ be an algebraic D-variety defined over $K$ and suppose that $a\in (V,s)^\#$ is a $\D$-generic point over $K$ such that $\s a\in K(a)^{alg}$. Suppose $L>K$ is an algebraically closed differential-difference field and $W$ is the Zariski locus of $a$ over $L$. Then $(W,s|_W)$ is an algebraic D-variety and, under the identification of $j_r W_a$ as a subspace of $j_r V_a$, we have that $j_r W_a$ is a $(\D,\s)$-submodule of $(j_r V_a,\D^*,\s^*)$. Indeed, by Lemma 4.7 of \cite{Bu3}, $j_r W_a$ is a $(\d_i,\s)$-submodule of $(j_r V_a,\d_i^*,\s^*)$ for all $i=1,\dots,r$.
\end{remark}

A type $p=tp(a/K)$ is said to be \emph{finite dimensional} if the transcendence degree of $K\l a\r$ over $K$ is finite. We now show that, up to interdefinability over $K$, if $tp(a/K)$ is finite dimensional, then $\s a\in K(a)^{alg}$ and $a$ is a $\D$-generic point of the sharp points of an algebraic D-variety over $K$.

\begin{lemma}\label{trey}
Suppose $p=tp(a/K)$ is finite dimensional. Then there is an algebraic D-variety $(V,s)$ over $K$ and a $\D$-generic point $c\in (V,s)^\#$ over $K$ such that $K\l a\r=K\l c\r$ and $\s c\in K(c)^{alg}$.
\end{lemma}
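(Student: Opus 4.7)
The plan is first to enlarge $a$ to a tuple of $\s$-iterates on which $\s$ acts algebraically, then to move, via Fact~\ref{interde}, to a $\D$-interdefinable tuple sitting in the sharp points of an algebraic D-variety, and finally to verify that this last step preserves the algebraicity of $\s c$ over $K(c)$.

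For the first step, I use that $K\l a\r$ has finite transcendence degree over $K$, so the ascending chain $K(a)\subseteq K(a,\s a)\subseteq\cdots$ must stabilise in transcendence degree; there is thus some $N$ with $\s^{N+1}a\in K(a,\s a,\dots,\s^N a)^{alg}$. I will set $c_0=(a,\s a,\dots,\s^N a)$, so that by construction $\s c_0\in K(c_0)^{alg}$ and $K\l c_0\r=K\l a\r$. The $\D$-field generated by $c_0$ over $K$ sits inside $K\l a\r$ and is therefore of finite transcendence degree over $K$, so Fact~\ref{interde} applied to $c_0$ yields an algebraic D-variety $(V,s)$ over $K$ together with a $\D$-generic point $c\in(V,s)^\#$ over $K$ such that $c$ is $\D$-interdefinable with $c_0$ over $K$. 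Sharpness of $c$ gives $K\{c\}_\D=K(c)$; combining this with the $\D$-interdefinability gives $c\in K\{c_0\}_\D\subseteq K\l a\r$ and $c_0\in K\{c\}_\D=K(c)\subseteq K\l c\r$, so $K\l a\r=K\l c\r$.

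The bulk of the work is verifying $\s c\in K(c)^{alg}$, and this is the step I expect to require the most care. I plan to exploit that $c$ is a $\D$-rational function of $c_0$: writing $c=R(\nabla c_0)$, where $R$ is a rational function over $K$ and $\nabla c_0$ is a finite tuple of $\D$-derivatives of $c_0$, and using that $K$ is $\s$-closed (as a differential-difference subfield) and that $\s$ commutes with every $\d\in\D$, applying $\s$ will give $\s c=R^\s(\nabla \s c_0)$. Starting from the algebraic equations satisfied by $\s c_0$ over $K(c_0)$ and differentiating with respect to each $\d\in\D$, a routine induction on the order of the derivative shows $\nabla\s c_0\in K(\nabla c_0,\s c_0)^{alg}$. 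Since $\nabla c_0\in K\{c_0\}_\D=K(c)$ and $\s c_0\in K(c_0)^{alg}\subseteq K(c)^{alg}$, this will yield $\s c\in K(c)^{alg}$. The main obstacle is precisely the bookkeeping needed to locate the $\D$-derivatives of $\s c_0$ inside $K(c)^{alg}$; everything else is essentially forced once Fact~\ref{interde} is invoked.
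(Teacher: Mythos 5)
Your proof is correct and follows essentially the same route as the paper: enlarge $a$ to a finite tuple of $\s$-iterates on which $\s$ acts algebraically, invoke Fact~\ref{interde} to replace it by a $\D$-generic sharp point $c$, and use sharpness ($K\{c\}_\D = K(c)$) together with $\s$ commuting with $\D$ to deduce $\s c\in K(c)^{alg}$. The only cosmetic differences are that you get your bound $N$ from stabilization of the chain $K(a)\subseteq K(a,\s a)\subseteq\cdots$ (giving $\s^{N+1}a\in K(c_0)^{alg}$ outright), whereas the paper uses a bound $s$ with $K\l a\r\subseteq K(\T_s a)^{alg}$ (giving only that $\s b$ is algebraic over $K\{b\}_\D$), and that you spell out the final verification that $\s c\in K(c)^{alg}$ in more detail; both choices work and lead to the same conclusion by the same mechanism.
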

\begin{proof}
Since $p$ is finite dimensional then there is $s< \w$ such that $K\l a\r\subseteq K(\T_s a)^{alg}$ where
$$\T_s a=\{\d_m^{e_m}\cdots \d_1^{e_1}\s^k a \colon  e_i,k < \w \text{ and } e_1+\cdots+e_m+k\leq s\}.$$
In particular, $\s^{s+1} a\in K(\T_s a)^{alg}$. Hence, if we let $b=(a,\s a,\dots,\s^s a)$, then $K\l a\r=K\l b\r$ and $\s b$ is algebraic over the $\D$-field generated by $b$ over $K$. Since the latter has finite transcendence degree over $K$, Fact~\ref{interde} implies that there is a tuple $c$, $\D$-interdefinable with $b$ over $K$, such that $c$ is a $\D$-generic point of $(V,s)^\#$ over $K$ for some algebraic D-variety $(V,s)$. Hence, $K\l a\r=K\l c\r$ and $\s c\in K(c)^{alg}$.
\end{proof}

We now give a standard description (up to interalgebraicity over $K$) of the canonical base for finite dimensional types.

\begin{lemma}\label{minf}
Suppose $a$ is a tuple such that $\s a, \d a \in K(a)^{alg}$ for all $\d\in \D$. Let $L>K$ be an algebraically closed differential-difference field, $V$ the Zariski locus of $a$ over $L$, and $F$ the minimal field of definition of $V$. Then, $$Cb(a/L)\subseteq\operatorname{acl}(F,K)\quad \text{ and } \quad F\subseteq Cb(a/L).$$ In particular, $\operatorname{acl}(Cb(a/L), K)=\operatorname{acl}(F,K)$. 
\end{lemma}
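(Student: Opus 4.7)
The plan is to apply the forking characterization from Proposition~\ref{propert}(v): for subsets $A,C,D$ of the monster, $A\ind_C D$ holds iff $\l A\cup C\r$ is algebraically disjoint from $\l D\cup C\r$ over $\l C\r$. Since $tp(a/L)$ is a type over an algebraically closed differential-difference field, by Proposition~\ref{indal} it is an amalgamation base, and combined with Proposition~\ref{elimim} the canonical base $B:=Cb(a/L)$ exists as a definably closed (hence differential-difference) subfield of $L$, with $a\ind_B L$. The two containments will be established independently.

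For $F\subseteq B$, applying the forking criterion to $a\ind_B L$ (with $\l L\r=L$) gives that $B\l a\r$ is algebraically disjoint from $L$ over $B$. Using additivity of transcendence degree applied to the towers $B\subseteq B(a)\subseteq B\l a\r$ and $L\subseteq B(a)L\subseteq B\l a\r L$, this forces the subfield $B(a)$ to be algebraically disjoint from $L$ over $B$ as well. Hence
\[
\operatorname{tr.deg}(B(a)/B)=\operatorname{tr.deg}(L(a)/L)=\dim V.
\]
Let $V_B$ denote the Zariski locus of $a$ over $B$. Since $V_B$ is defined over $B\subseteq L$ and contains $a$, and $V$ is the smallest $L$-closed subvariety containing $a$, we have $V\subseteq V_B$; both are irreducible and of the same dimension, so $V=V_B$. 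Thus $V$ is defined over $B$, and by minimality of $F$ we conclude $F\subseteq B$.

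For $Cb(a/L)\subseteq\operatorname{acl}(F,K)$, I would verify $a\ind_{\operatorname{acl}(F,K)}L$; the containment then follows from the standard property that the canonical base lies in the algebraic closure of any set over which the type does not fork. Since $V$ is irreducible over $L$ and defined over $F\subseteq\operatorname{acl}(F,K)\subseteq L$, it remains irreducible over $\operatorname{acl}(F,K)$, and $a$ is a generic point of $V$ over $\operatorname{acl}(F,K)$; consequently $\operatorname{tr.deg}(\operatorname{acl}(F,K)(a)/\operatorname{acl}(F,K))=\dim V=\operatorname{tr.deg}(L(a)/L)$, which gives algebraic disjointness of $\operatorname{acl}(F,K)(a)$ from $L$ over $\operatorname{acl}(F,K)$. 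As $K\subseteq\operatorname{acl}(F,K)$, the hypothesis $\sigma a,\delta a\in K(a)^{alg}$ yields $\operatorname{acl}(F,K)\l a\r\subseteq \operatorname{acl}(F,K)(a)^{alg}$, and algebraic extensions preserve transcendence degrees, so $\operatorname{acl}(F,K)\l a\r$ is still algebraically disjoint from $L$ over $\operatorname{acl}(F,K)$, as required. The ``in particular'' statement is then immediate from the two containments. The main subtlety is the transfer in the second paragraph from algebraic disjointness of the differential-difference field $B\l a\r$ to that of the smaller field $B(a)$, since $K$ need not be contained in $B$ and one cannot directly invoke $B\l a\r\subseteq B(a)^{alg}$.
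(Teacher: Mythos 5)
Your proof is correct and follows essentially the same route as the paper: use minimality of the field of definition to get $a\ind_{\operatorname{acl}(F,K)}L$ for one containment, and use $a\ind_{Cb(a/L)}L$ together with a dimension count on Zariski loci for the other. Two small remarks. First, the ``subtlety'' you flag at the end is a non-issue: algebraic disjointness of $B\l a\r$ from $L$ over $B$ passes automatically to the intermediate field $B(a)\supseteq B$, with no transcendence-degree bookkeeping needed, so you need not worry about whether $K\subseteq B$. Second, in a merely simple (non-stable) theory the ``standard property'' you invoke is not that $Cb(p)$ lies in the algebraic closure of \emph{any} nonforking base; one also needs the restriction to be an amalgamation base. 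Here $\operatorname{acl}(F,K)$ is an algebraically closed differential-difference subfield, so Proposition~\ref{indal} supplies exactly this, and the paper cites it explicitly at this point; you should too.
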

\begin{proof}
As $F$ is the minimal field of definition of $V=loc(a/L)$, $a$ is algebraically disjoint from $L$ over $F$. Then, as $\s a, \d a\in K(a)^{alg}$ for all $\d\in \D$, $K\l a\r\subseteq K(a)^{alg}$ is algebraically disjoint from $L$ over $\operatorname{acl}(F, K)$. Hence, $a\ind_{\operatorname{acl}(F, K)} L$. By Proposition~\ref{indal}, the restriction of $tp(a/L)$ to $\operatorname{acl}(F,K)$ is an amalgamation base, and so, by definition, $Cb(a/L)\subseteq \operatorname{acl}(F,K)$.
On the other hand, since $a\ind_{Cb(a/L)} L$, $a$ is algebraically disjoint from $L$ over $Cb(a/L)$. Hence, the Zariski locus of $a$ over $Cb(a/L)$ must also be $V$, and so $V$ is defined over $Cb(a/L)$. Thus, $F\subseteq Cb(a/L)$.
\end{proof}

We are now in position to prove the canonical base property for finite dimensional types. Let us first recall that a type $tp(a/k)$, over a differential-difference $k$, is almost $\C_\U$-internal if there is $b\ind_k a$ and a tuple $c$ from $\C_\U$ such that $a\in \operatorname{acl}(k,b,c)$. 

\begin{theorem}[Canonical base property]\label{cbp}
Suppose $tp(a/K)$ is finite dimensional and $L>K$ is an algebraically closed differential-difference field. Then 
$$tp(Cb(a/L)/K\l a\r)$$ 
is almost $\C_\U$-internal.
\end{theorem}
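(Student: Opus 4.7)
The plan is to follow the Pillay--Ziegler strategy of using jet spaces, combined with the $(\D,\s)$-module structure on jet spaces of sharp points developed earlier in this section.

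First, apply Lemma~\ref{trey} to replace $a$, up to interdefinability over $K$, by a $\D$-generic point of $(V,s)^\#$ for some algebraic D-variety $(V,s)$ defined over $K$ with $\s a \in K(a)^{alg}$. Let $W$ be the Zariski locus of $a$ over $L$ and $F$ its minimal field of definition. By Lemma~\ref{minf}, $Cb(a/L)$ and $F$ are interalgebraic over $K$, so it suffices to prove that $tp(F/K\l a\r)$ is almost $\C_\U$-internal. Since $F$ is finitely generated over $K$ and, by Fact~\ref{thethe}, $W$ is determined by the tower $(j_r W_a \subseteq j_r V_a)_{r \geq 1}$, a single sufficiently large $r$ is enough to code $F$.

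Fix such an $r$ and invoke the $(\D,\s)$-module structure on $(j_r V_a, \D^*, \s^*)$ --- which is defined over $K\l a\r$ --- together with the Remark preceding Lemma~\ref{trey} exhibiting $j_r W_a$ as a $(\D,\s)$-submodule. Apply the finite-dimensional $(\D,\s)$-module lemma twice: once to $j_r V_a$ to produce a $\U$-basis $\underline{e}$ lying in $(j_r V_a)^\#$, which puts the ambient module into the canonical form $(\U^d, \D, \s)$ with $(j_r V_a)^\# \cong \C_\U^d$; and again to the submodule $j_r W_a$ itself, giving that it is spanned over $\U$ by its sharp points. This yields the structural observation that any $(\D,\s)$-submodule $N$ of $(\U^d, \D, \s)$ satisfies $N = (N \cap \C_\U^d) \otimes_{\C_\U} \U$, and hence $j_r W_a$ is determined by the $\C_\U$-subspace $M := j_r W_a \cap (j_r V_a)^\#$.

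To assemble almost-internality, choose $\underline{e}$ generically: the variety of $\U$-bases of $(j_r V_a)^\#$ is $K\l a\r$-definable, so by saturation of $\U$ pick $\underline{e}$ with $\underline{e} \ind_{K\l a\r} F$. Fixing this basis, the subspace $M$ is coded by a tuple $c$ from $\C_\U$ (say, via reduced row echelon coordinates of a basis of $M$ in the basis $\underline{e}$). Then $F \in \operatorname{acl}(K\l a\r, \underline{e}, c)$ with $c$ a tuple from $\C_\U$ and $\underline{e}$ independent from $F$ over $K\l a\r$, which is precisely almost $\C_\U$-internality of $tp(F/K\l a\r)$ and hence of $tp(Cb(a/L)/K\l a\r)$. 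The conceptual core --- and the step I expect to be the main obstacle --- is the structural fact that $(\D,\s)$-submodules of the canonical module over $\U$ descend to $\C_\U$; this rests on the existence of a sharp $\U$-basis furnished by the earlier $(\D,\s)$-module lemma, which is in turn the place where the existential closedness of $\U$ (through the matrix equation for $M$) is used.
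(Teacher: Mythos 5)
Your proposal follows the same Pillay--Ziegler strategy as the paper, with the same three ingredients: Lemma~\ref{trey} to reduce to a $\D$-generic sharp point of an algebraic D-variety, Lemma~\ref{minf} to replace $Cb(a/L)$ by the minimal field of definition $F$ of $W=\operatorname{loc}(a/L)$, and the $(\D,\s)$-module lemma to trivialize the jet spaces over $\C_\U$. The construction of the generic sharp basis $\underline{e}$ and the coding of $j_r W_a$ by a tuple $c$ from $\C_\U$ is exactly the paper's move, phrased via the (correct) observation that any $(\D,\s)$-submodule $N$ of $(\U^d,\D,\s)$ satisfies $N=(N\cap\C_\U^d)\otimes_{\C_\U}\U$.

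The one place where you diverge from the paper, and where there is a genuine gap, is the assertion that ``a single sufficiently large $r$ is enough to code $F$.'' You justify this by citing that $F$ is finitely generated and that $W$ is determined by its jets via Fact~\ref{thethe}; but Fact~\ref{thethe} requires equality of jets \emph{at all levels} $r$. An automorphism $\phi$ over $(K\l a\r,\underline{e},c)$ only fixes $j_rW_a$ for the single chosen $r$, from which one can recover $j_{r'}W_a$ for $r'\le r$ (via $j_{r'}W_a=j_rW_a\cap j_{r'}V_a$), but not for $r'>r$. So $\phi(W)=W$ does not follow, and $F\in\operatorname{acl}(K\l a\r,\underline{e},c)$ is not established as written. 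Making a single-$r$ argument work would require a uniformity statement of the form: for $V$, $a$, and the degree bound $\deg W$ fixed, there is an $r_0$ such that level-$r_0$ jets at $a$ separate all irreducible subvarieties of $V$ through $a$ of that degree. This is plausible (by noetherianity in the relevant constructible family) but is not trivial, and you do not argue it. The paper avoids the issue entirely by taking $B=\bigcup_r b_r$ and $E=\bigcup_r e_r$, showing $F\subseteq\operatorname{dcl}(a,K,B,E)$ and relying on Fact~\ref{thethe} with all $r$; the passage to a finite tuple is then a matter of finitary character of $\operatorname{dcl}$ applied to the finitely many generators of $F$. You should either adopt that route or supply the uniformity argument. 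The rest of your write-up --- in particular the part you flag as the ``conceptual core,'' the descent of submodules to $\C_\U$ --- is correct and is indeed the content of the $(\D,\s)$-module lemma.
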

\begin{proof}
We may replace $a$ by anything interdefinable with it over $K$. Hence, by Lemma~\ref{trey}, we may assume that $\s a\in K(a)^{alg}$ and that there is an algebraic D-variety $(V,s)$ defined over $K$ such that $a\in (V,s)^\#$ is a $\D$-generic point over $K$. Let $W$ be the locus of $a$ over $L$. Then $(W,s|_W)$ is an algebraic $D$-variety with $a$ as a $\D$-generic point of $(W,s|_W)^\#$ over $L$. By Lemma~\ref{minf}, if $F$ is the minimal field of definition of $W$ then $\operatorname{acl}(F,K)=\operatorname{acl}(Cb(a/L),K)$. Thus, it suffices to prove that $tp(F/K\l a\r)$ is almost $\C_\U$-internal.

Consider the $(\D,\s)$-module $(j_r V_a, \D^*,\s^*)$ and recall that $j_r W_a$ is a $(\D,\s)$-submodule. For each $r$, let $b_r$ a $\C_\U$-basis of $j_r V_a^\#$ which is also a $\U$-basis of $j_r V_a$. Let 
$$B=\bigcup_{r=1}^{\infty}b_r,$$ we may choose the $b_r$'s such that $F\ind_{K\l a\r_{\D,\s}} B$. The basis $b_r$ yields a $(\D,\s)$-module isomorphism between $(j_r V_a,\D^*,\s^*)$ and $(\U^{d_r},\D,\s)$ which therefore takes $j_r W_a$ into a $(\D,\s)$-submodule $S_r$ of $(\U^{d_r},\D,\s)$. We can find a $\C_\U$-basis $e_r$ of $S_r^\#\subseteq \C_\U^{d_r}$ which is also a $\U$-basis of $S_r$, so $S_r$ is defined over $e_r\subset\C_\U^{d_r}$. Let $\displaystyle E=\bigcup_{r=1}^{\infty}e_r$. 

It suffices to show that $F\subseteq \operatorname{dcl}(a,K,B,E)$. To see this, let $\phi$ be an automorphism of $(\U,\D,\s)$ fixing $a,K,B,E$ pointwise. Since $j_r V_a$ is defined over $K\l a \r$, then $\phi(j_r V_a)=j_rV_a$. Also, as each $S_r$ is defined over $E$ and the isomorphism between $S_r$ and $j_r W_a$ is defined over $B$, $\phi(j_rW_a)=j_r W_a$ for all $r>0$. Since $V$ is defined over $K$ and $\phi$ fixes $a$ pointwise, Fact~\ref{thethe} implies that $\phi(W)=W$. But $F$ is the minimal field of definition of $W$, thus $\phi$ fixes $F$ pointwise, as desired.
\end{proof}

\begin{remark}\label{onjet}
Even though we did not mentioned it explicitly in the proof of Theorem~\ref{cbp}, the key construction in it is that of
\begin{align*}
(j_r W_a,\D^*,\s^*)^\#
&= \{\lambda \in j_r W_a\colon \s^*(\lambda)=\lambda \text{ and }\d_i^*(\lambda)=0, i=1,\dots,m\} \\
&= \{\lambda\in j_r W_a\colon \s(\lambda)=f(\lambda) \text{ and } \d_i\comp \lambda=\lambda\comp \d_i, i=1\dots,m\},
\end{align*}
which is a finite dimensional $\C_\U$-vector space. One could call this vector space the $r$-th $(\D,\s)$-jet space at $a$ of the $(\D,\s)$-locus of $a$ over $L$. However, $(\D,\s)$-jet spaces for arbitrary $(\D,\s)$-varieties have already been defined, they are a special case of the generalized Hasse-Schmidt jet spaces of Moosa and Scanlon from \cite{MS2}, and so there is the question of how our construction compares to theirs. In the appendix we show that in this context, namely $a$ is a $\D$-generic point of the sharp points of an algebraic D-variety over $L$ and $\s a \in L(a)^{alg}$, the two constructions agree. (This has been checked in the differential case, see the end of \S4.3 of \cite{MS2}.) Thus, it is consistent and appropriate to use this terminology to refer to $(j_r W_a,\D^*,\s^*)^\#$. It is worth mentioning that this fact is independent from the results of this paper, and that we spell it out (in the appendix) for the benefit of the reader familiar with the work of Moosa and Scanlon.
\end{remark}

As a consequence of the canonical base property, we obtain the Zilber dichotomy for finite dimensional minimal types. We omit the standard proof (for detailed proofs in the settings of $DCF_0$ and $\mathcal{D}$-$CF_0$ we refer the reader to Corollary 3.10 of \cite{PZ} and Corollary 6.19 of \cite{MS}, respectively).

\begin{corollary}[Zilber dichotomy]\label{zildi}
Let $p$ be a finite dimensional complete type over $K$ with $SU(p)=1$. Then $p$ is either one-based or almost $\C_\U$-internal.
\end{corollary}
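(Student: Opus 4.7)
The plan is to derive Corollary \ref{zildi} from the canonical base property (Theorem \ref{cbp}) via the standard Pillay--Ziegler argument \cite[Corollary 3.10]{PZ}, adapted as in Moosa--Scanlon \cite[Corollary 6.19]{MS} and Bustamante \cite[\S4]{Bu3}. I would argue the contrapositive: assuming $p=tp(a/K)$ is a finite dimensional, $SU$-rank one type which is \emph{not} one-based, I will show $p$ is almost $\C_\U$-internal. Beyond Theorem \ref{cbp}, the only tools needed are supersimplicity (Proposition \ref{propert}(v)), amalgamation over algebraically closed differential-difference fields (Proposition \ref{indal}), and the standard supersimple-theoretic machinery of canonical bases and Morley sequences.

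First I would unfold non-one-basedness. Since $p$ is not one-based, there is an algebraically closed differential-difference field $B\supseteq K$ and a realisation $a\models p|B$ such that $c:=Cb(tp(a/B))\not\subseteq\operatorname{acl}(Ka)$. Taking a Morley sequence $a=a_0,a_1,\ldots$ of $tp(a/B)$ over $B$, one has $c\in\operatorname{acl}(a_0,\ldots,a_n)$ for some $n$. By a standard extraction argument (using indiscernibility of the sequence and invariance of the properties in question) one may arrange that $(a_i)$ is simultaneously a $K$-Morley sequence of $p$, while preserving both $c\in\operatorname{acl}(a_0,\ldots,a_n)$ and $c\not\subseteq\operatorname{acl}(Ka_0)$ as well as the conclusion of Theorem \ref{cbp}, namely that $tp(c/K\langle a_0\rangle)$ is almost $\C_\U$-internal. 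Picking a coordinate $c_0$ of $c$ with $c_0\notin\operatorname{acl}(Ka_0)$ then yields a non-algebraic, almost $\C_\U$-internal type $tp(c_0/K\langle a_0\rangle)$.

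Next I would transfer almost-internality back to a realisation of $p$. Since $c_0\in\operatorname{acl}(K,a_0,\ldots,a_n)$ but $c_0\notin\operatorname{acl}(Ka_0)$, algebraic exchange produces an index $i\geq 1$ with
\[
a_i\in\operatorname{acl}\bigl(K,a_0,\ldots,\widehat{a_i},\ldots,a_n,c_0\bigr).
\]
Write $K':=K\langle a_0,\ldots,\widehat{a_i},\ldots,a_n\rangle$. Because $(a_i)$ is a $K$-Morley sequence of $p$, we have $a_i\ind_K K'$, so $tp(a_i/K')$ is a non-forking extension of $p$ of $SU$-rank one. Almost-internality of $tp(c_0/K\langle a_0\rangle)$ is preserved when the base is enlarged from $K\langle a_0\rangle$ to $K'$, and then propagates from $c_0$ to $a_i$ via the displayed algebraic dependence. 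Thus $tp(a_i/K')$ is almost $\C_\U$-internal, and since almost-internality descends along non-forking restrictions in supersimple theories, $p$ itself is almost $\C_\U$-internal, as required.

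The main obstacle is the bookkeeping in the middle step: extracting from the $B$-Morley sequence a $K$-Morley sequence that still witnesses the failure of one-basedness, and verifying that almost $\C_\U$-internality survives the chain of base changes $K\langle a_0\rangle\to K'\to K$. Both points are entirely formal consequences of the theory of supersimple theories (see \cite{Ca}) once Theorem \ref{cbp} and Proposition \ref{indal} are in place, which is why the author regards the proof as a standard adaptation of the arguments from \cite{PZ}, \cite{MS}, and \cite{Bu3}.
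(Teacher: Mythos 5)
Your overall strategy — derive the dichotomy from the canonical base property via the Pillay--Ziegler exchange-and-transfer argument — is exactly the approach the paper intends (it omits the proof and cites precisely \cite{PZ}, \cite{MS}, \cite{Bu3}). However, the execution contains a genuine error at the very first step that propagates through the rest.

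You begin by asserting that non-one-basedness yields $B\supseteq K$ and ``a realisation $a\models p|B$ such that $c:=Cb(tp(a/B))\not\subseteq\operatorname{acl}(Ka)$.'' This is contradictory: if $a\models p|B$ in the sense of a non-forking extension, then by definition $tp(a/B)$ does not fork over $K$, so $Cb(a/B)\subseteq\operatorname{acl}(K)\subseteq\operatorname{acl}(Ka)$. Even if you drop the non-forking and just take $a\models p$, a \emph{single} realisation of a minimal type cannot witness failure of one-basedness in the way you want: either $a\ind_K B$ (and $Cb\subseteq\operatorname{acl}(K)$), or $a\in\operatorname{acl}(B)$, in which case $tp(a/B)$ is algebraic and its $B$-Morley sequence consists of $B$-conjugates of $a$, which is nothing like a $K$-Morley sequence of $p$. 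The correct witness to non-one-basedness is a finite \emph{tuple} $\bar a$ of realisations of $p$ (still of finite-dimensional type over $K$, so Theorem~\ref{cbp} applies) and a $B\supseteq K$ with $Cb(\bar a/B)\not\subseteq\operatorname{acl}(K\bar a)$; this is essential, as the standard example of the field of constants shows that non-one-basedness need only appear at the level of pairs.

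This error undermines your step 4 as well: since $tp(\bar a/B)$ \emph{forks} over $K$ (this is forced by $Cb(\bar a/B)\not\subseteq\operatorname{acl}(K)$), the $B$-Morley sequence $\bar a_0,\bar a_1,\ldots$ is generally not $K$-independent, and there is no ``standard extraction'' that makes it a $K$-Morley sequence of $p$ while preserving $c\in\operatorname{acl}(\bar a_0\ldots\bar a_n)$. Fortunately, the $K$-Morley property is not actually needed. The repair is to enumerate the individual realisations of $p$ occurring in $\bar a_0,\ldots,\bar a_n$ as $b_1,\ldots,b_N$, with $b_1,\ldots,b_\ell$ being the coordinates of $\bar a_0$, pick a coordinate $c_0$ of $c$ with $c_0\notin\operatorname{acl}(K b_1\ldots b_\ell)$, take the \emph{least} $j$ with $c_0\in\operatorname{acl}(Kb_1\ldots b_j)$, and set $K'=\operatorname{acl}(K\langle b_1,\ldots,b_{j-1}\rangle)$. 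Minimality of $j$ forces $b_j\notin\operatorname{acl}(K')$, hence (since $SU(p)=1$) $b_j\ind_K K'$; exchange gives $b_j\in\operatorname{acl}(K',c_0)$. Since $K'\supseteq K\langle\bar a_0\rangle$, almost $\C_\U$-internality of $tp(c_0/K\langle\bar a_0\rangle)$ (from Theorem~\ref{cbp}) transfers to $tp(c_0/K')$ and hence to $tp(b_j/K')$, and then descends along the non-forking restriction to $p=tp(b_j/K)$. Your final two descent steps (base extension preserves almost-internality; almost-internality of a non-forking extension implies it for the restriction) are correct, but your choice of $K'$ --- omitting $a_i$ from a sequence you claimed was $K$-Morley --- relied on the unavailable $K$-independence, so the initial-segment version is what is actually needed.
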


\begin{remark}
The assumption in Corollary~\ref{zildi} that $p$ is finite dimensional should not be necessary, though a different proof is needed. To prove the general case one could work out the theory of arc spaces of Moosa, Pillay, Scanlon \cite{MPS} in the $(\D,\s)$ setting, and apply their weak dichotomy for regular types. This is done in the $(\d,\s)$ setting (i.e. $DCF_{0,1}A$) by Bustamante in \cite{Bu4}, and we expect that the arguments there extend to our setting. 
\end{remark}

\section*{Appendix: On $(\D,\s)$-jet spaces}

As in Section \ref{Zil}, we fix a sufficiently saturated $(\U,\D,\s)\models DCF_{0,m}A$, and an algebraically closed differential-difference subfield $K$ of $\U$. In this appendix we show that if $(V,s)$ is an algebraic D-variety over $K$ (as in Definition~\ref{Dvar}) and $a$ is a $\D$-generic point of $(V,s)^\#$ over $K$ such that $\s a\in K(a)^{alg}$, then $(j_r V_a,\D^*,\s^*)^\#$ is the $r$-th $\HS$-jet space at $a$ of the $\HS$-locus of $a$ over $K$, where $\HS$ is the iterative Hasse-Schmidt system for differential-difference rings described in Example A.2 of \cite{MS2}. 

As we mentioned in Remark~\ref{onjet}, the results in this appendix are independent from the results of the rest of the paper, and we present them to justify why the construction of $(j_r V_a,\D^*,\s^*)^\#$ is a rigorous one and indeed yields the right notion of the $r$-th $(\D,\s)$-jet space at $a$ of the $(\D,\s)$-locus of $a$ over $K$. Throughout this appendix we assume that the reader is familiar with the theory of generalized Hasse-Schmidt jet spaces developed by Moosa and Scanlon in \cite{MS1} and \cite{MS2}.

Let us start by recalling the iterative Hasse-Schmidt system $\HS=\left(\DD_n\right)_{n<\w}$ for (partial) differential-difference rings described in \cite{MS2}. Let $\eta=(\eta_1,\dots,\eta_m)$ be an $m$-tuple of indeterminates. For each $\QQ$-algebra $R$, let 
$$\DD_n(R)=\prod_{i=1}^n R[\eta]/(\eta)^{i}\times R[\eta]/(\eta)^{n+1} \times \prod_{i=1}^n R[\eta]/(\eta)^{n+1-i},$$
$s_n:R\to \DD_n(R)$ be the diagonal compose with the natural inclusion on each factor, $\psi_n:\DD_n(R)\to R^{\ell_n}$ be the identification via a fixed monomial basis on each factor, and $\pi_{m,n}:\DD_m(R)\to\DD_n(R)$ be the projection from the $(m-n)$ to the $(m+n+1)$ coordinates compose with the natural quotient on each factor. The iteration map $\Ga_{(m,n)}: \DD_{m+n}(R)\to \DD_m(\DD_n(R))$ is given by 
$$f_i(\eta)_{-n-m\leq i\leq n+m}\mapsto ((f_{\alpha+\beta}(\chi +\epsilon))_{-n\leq \alpha\leq n})_{-m\leq \beta\leq m},$$
where $\chi$ and $\epsilon$ are $m$-tuples of indeterminates.

Given a differential-difference field $(F,\D,\s)$, one can make $F$ into an iterative $\HS$-field by setting $E_n:F\to \DD_n(F)$ to be 
\begin{align*}
E_n(x)
&= \left(\sum_{\alpha\in \NN^m,|\alpha|\leq n-i} \frac{1}{\alpha !}\s^{-i}\d^{\alpha}(x)\eta^{\alpha},\sum_{\alpha\in \NN^m,|\alpha|\leq n-i} \frac{1}{\alpha !}\d^{\alpha}(x)\eta^{\alpha}, \right. \\
&\quad  \quad\left.\sum_{\alpha\in \NN^m,|\alpha|\leq n-i} \frac{1}{\alpha !} \s^i \d^{\alpha}(x)\eta^{\alpha} :\; i=1,\dots,n\right),
\end{align*}
where $\d=(\d_1,\dots,\d_m)$ and multi-index notation is being used.
Conversely, an iterative $\HS$-field is naturally a differential-difference field. Indeed, if $(F,E)$ is a $\HS$-field, where $E=(E_n:F\to \DD_n(F))_{n<\w}$ is the sequence of $\QQ$-algebra homomorphisms, then the maps $\d_1,\dots,\d_m,\s:K\to K$ coming from 
$$E_1(x)=(\s^{-1} (x), x+\sum_{i=1}^m \d_i(x)\eta_i,\s (x))$$
yield a set of commuting derivations and an automorphim commuting with the derivations.

Our main tool will be the following characterization of $\HS$-jet spaces:

\begin{fact}[\cite{MS2}, Theorem 4.17]\label{usef}
Suppose $\underline{Z}$ is an irreducibe $\HS$-subvariety of an algebraic variety $X$ over $K$, and $a\in \underline{Z}(\U)$ is a generic point over $K$. An algebraic jet $\lambda\in j_r X_a(\U)$ is in $j_r^{\HS}\underline{Z}_a(\U)$ if and only if for all $k\geq 0$ there exists $\gamma_k\in j_r (Z_k)_{\nabla_k(a)}(\U)$ extending $\lambda$, such that $E_k\comp \lambda=(\gamma_k\otimes_\U \DD_k(\U) )\comp e_k$.
\end{fact}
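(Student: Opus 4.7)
The plan is to unwind the construction of the generalized $\HS$-jet space $j_r^{\HS}\underline{Z}_a$ from \cite{MS2} and match its defining universal property with the explicit compatibility condition in the statement. The key observation is that $j_r^{\HS}\underline{Z}_a$ should be the subspace of the algebraic jet space $j_r X_a$ consisting of those algebraic $r$-jets $\lambda$ whose image under $E_k$ respects the prolongation structure of $\underline{Z}$ at every order $k$. Since the $\HS$-structure on $\underline{Z}$ is encoded, at each $k$, by the prolongation $Z_k$ together with the map $\nabla_k : \underline{Z} \to Z_k$, respecting it is tantamount to producing, after applying $E_k$, a factorization through the pullback $\nabla_k^*$ at the level of coordinate rings.

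For the forward direction, given $\lambda \in j_r^{\HS}\underline{Z}_a(\U)$, I would construct $\gamma_k$ by a diagram chase: by definition of an $\HS$-jet, the composition $E_k \comp \lambda$ respects the factorization of $E_k|_{\mathcal{O}(\underline{Z})}$ through $\nabla_k^*$, so it canonically descends to a well-defined algebraic $r$-jet $\gamma_k$ of $Z_k$ at $\nabla_k(a)$ extending $\lambda$, and the identity $E_k \comp \lambda = (\gamma_k \otimes_\U \DD_k(\U)) \comp e_k$ is precisely this factorization. For the converse, suppose we are given a family $(\gamma_k)_{k \geq 0}$ with the stated property. Then the identity for each $k$ says exactly that the action of $E_k$ on $\lambda$ factors through the $k$-th prolongation $Z_k$, which is the condition defining $\HS$-jets. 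Since this holds for every $k$, $\lambda$ lies in $j_r^{\HS}\underline{Z}_a(\U)$ by the universal property of the $\HS$-jet space.

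The main obstacle I anticipate lies in the compatibility of the family $(\gamma_k)_{k \geq 0}$ across different orders. One must verify that these algebraic jets glue consistently via the iteration map $\Gamma_{(m,n)} : \DD_{m+n}(R) \to \DD_m(\DD_n(R))$ and the transition projections $\pi_{m,n}$, so that their conjunction really pins down an $\HS$-jet rather than just a collection of unrelated algebraic jets on the prolongations. Genericity of $a$ over $K$ and irreducibility of $\underline{Z}$ should allow one to reduce this coherence check to a generic statement, which, combined with the fact that each $Z_k$ is obtained by applying the $\HS$-functor $\DD_k$ to the defining equations of $\underline{Z}$, should complete the argument.
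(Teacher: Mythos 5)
This statement is labeled a \emph{Fact} and is cited from \cite{MS2}, Theorem 4.17; the paper you are reading supplies no proof of it, so there is nothing in the paper to compare your argument against. It is invoked as a black box in the appendix.

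As for your sketch on its own merits: it treats the displayed $E_k$-compatibility condition as if it \emph{were} the definition of $j_r^{\HS}\underline{Z}_a$, so that the proof reduces to unwinding notation. That is not how Moosa and Scanlon set things up. In \cite{MS2} the generalized jet space $j_r^{\HS}\underline{Z}_a$ is constructed intrinsically from the $\HS$-subvariety (via interpolation through the jets of the prolongation sequence), and Theorem~4.17 is a genuine characterization theorem identifying that intrinsic object with the set of algebraic jets $\lambda\in j_r X_a$ satisfying the stated compatibility with every $Z_k$ through the maps $e_k$. Both halves of your argument bottom out in the phrase that the factorization ``is the condition defining $\HS$-jets,'' which is exactly the equivalence that needs proving; nothing in your sketch actually relates the intrinsic definition to the $E_k$-equations via the morphisms $r_k^{Z_0}$ and the induced maps $e_k$ on cotangent spaces, which is where the real work lies. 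You also anticipate a coherence obstruction across different $k$, but the statement does not ask for a compatible family $(\gamma_k)$: it only requires, for each $k$ separately, the existence of \emph{some} $\gamma_k$ extending $\lambda$ and satisfying the displayed identity. So the difficulty you expect is not actually present, while the difficulty that is present (establishing the equivalence between the intrinsic construction and the $E_k$-criterion) is not addressed.
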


Here $\nabla_k: X(\U)\to \tau_k X(\U)$ is the nabla map and 
$$e_k:\M_{Z_0,a}/\M_{Z_0,a}^{r+1}\to \M_{Z_k,\nabla_k(a)}/\M_{Z_k,\nabla_k(a)}^{r+1}\otimes_\U \DD_k(\U)$$
denotes the additive map induced by the canonical morphism $r_k^{Z_0}:\tau_k Z_0\times_\U \DD_k(\U)\to Z_0$ associated to the prolongation $\tau_k Z_0$.

Now suppose that $(V,s)$ is an algebraic D-variety over $K$ and $a$ is a $\D$-generic point of $(V,s)^\#$ over $K$ such that $\s a\in K(a)^{alg}$. Let $\underline{Z}=(Z_k:k\in \NN)$ be the $\HS$-locus of $a$ over $K$; that is, $Z_k$ is the Zariski locus of $\nabla_k(a)$ over $K$. Our goal is to show that
$$j_r^{\HS}\underline{Z}_a(\U)=(j_r V_a,\D^*,\s^*)^\#,$$ 
for all $r>0$.

As $a$ is a $\D$-generic point of $(V,s)^\#$, $a$ is a (Zariski) generic point of $V$. Hence, $Z_0=V$.
Since $\bar s=(\operatorname{Id},s)$ is an isomorphism between $V$ and the Zariski locus of $(a,\d a)=(a,\d_1 a,\dots,\d_m a)$ over $K$, we have that $Z_1$ decomposes as the fibred product
$$Z_1=W^{\s^{-1}}\times_V \bar{s}(V) \times_V W,$$
where $W$ is the Zariski locus of $(a,\s a)$. Moreover, for each $r>0$, $j_r(Z_1)_{\nabla_1(a)}$ decomposes as
$$j_r(Z_1)_{\nabla_1(a)}=j_r W^{\s^{-1}}_{(\s^{-1}a,a)}\times_{j_r V_a} j_r \bar{s}(V)_{(a,\d a)}\times_{j_r V_a} j_r W_{(a,\s a)}.$$
Recall that $j_r W_{(a,\s a)}$ is the graph of the isomorphism $f:j_rV_a\to j_r V^\s _{\s a}$ used to equip $j_r V_a$ with the $\s$-module structure $\s^*=f^{-1}\comp \s$. Also, one can check that $j_r W^{\s^{-1}}_{(\s^{-1}a,a)}$ is the graph of the isomorphism $g:j_r V^{\s^{-1}}_{\s^{-1}a}\to j_r V_a$ given by $g=\s^{-1}\comp f\comp \s$. 

Suppose $\lambda \in j_r V_a$. Then $\gamma_1:=(g^{-1}(\lambda), j_r\bar{s}_a(\lambda),f(\lambda))$ is the unique element of $j_r (Z_1)_{\nabla_1(a)}$ extending $\lambda$. Furthermore, since $\underline{Z}(\U)$ is given by first order differential and difference equations, the criterion of Fact \ref{usef} reduces to only checking the case $k=1$. Thus, $\lambda\in j_r^{\HS}\underline{Z}_a(\U)$ if and only if 
\begin{equation}\label{fina}
E_1\comp \lambda=(\gamma_1\otimes \mathcal{D}_1(\U))\comp e_1.
\end{equation}

Let $e\in \M_{V,a}/\M_{V,a}^{r+1}$. By definition, 
$$E_1\comp\lambda(e)=\left(\s^{-1}\comp\lambda(e),\lambda(e)+\sum_{i=1}^m \d_i\comp\lambda(e) \eta_i,\s\comp\lambda (e)\right).$$
Also, by definition,
$$e_1(e)=\left(e^{\s^{-1}},\, e+\sum_i\left(\sum_j \frac{\dd e}{\dd x_j}y_j+e^{\d_i}\right)\eta_i,\, e^\s\right).$$
Hence, $(\gamma_1\otimes\mathcal{D}_1(\U))\comp e_1(e)$ is equal to
$$\left(g^{-1}(\lambda)(e^{\s^{-1}}),\, \lambda\left(e+\sum_i\left(\sum_j \frac{\dd e}{\dd x_j}s_j+e^{\d_i}\right)\eta_i\right),\, f(\lambda)(e^\s)\right),$$
which, by definition of the extension of $\D$ to $\U[V]$, is equal to 
$$\left(g^{-1}(\lambda)(e^{\s^{-1}}),\, \lambda(e)+\sum_i \lambda\comp\d_i(e)\eta_i,\, f(\lambda)(e^\s)\right).$$
Using the fact that $\s\comp\lambda(e)=\s(\lambda)(e^\s)$ and that $g=\s^{-1}\comp f\comp\s$, we get that $f(\lambda)(e^\s)=\s\comp \lambda (e)$ is equivalent to $g^{-1}(\lambda)(e^{\s^{-1}})=\s^{-1}\comp \lambda (e)$. Thus, equality (\ref{fina}) is equivalent to 
$$\s(\lambda)=f(\lambda)\; \text{ and } \; \d_i\comp \lambda=\lambda\comp \d_i,\, i=1,\dots,m.$$
These are precisely the equations defining $(j_r V_a,\D^*,\s^*)^\#$, see Remark \ref{onjet}.

\

\bibliographystyle{plain}

\end{document}